\newtheorem{theorem}{Theorem}[section]
\theoremstyle{plain}
\newtheorem{corollary}[theorem]{Corollary}
\newtheorem{definition}[theorem]{Definition}
\newtheorem{lemma}[theorem]{Lemma}
\newtheorem{proposition}[theorem]{Proposition}
\theoremstyle{definition}
\newtheorem{remark}[theorem]{Remark}
\newtheorem{example}[theorem]{Example}
\numberwithin{equation}{section}
\DeclareMathOperator{\id}{id}
\DeclareMathOperator{\dimension}{dim}
\newcommand{\R}{\mathbb{R}}
\newcommand{\defterm}[1]{\textbf{#1}}
\def \twistmap {\mathcal{D}}
\def \submonodromy {\psi}
\newcommand{\subopenbook}{nested open book}
\newcommand{\subpage}{nested page}
\newcommand{\subbinding }{nested binding}
\renewcommand{\@biblabel}[1]{[#1]\hfill}
\begin{document}

\title{Nested Open Books And The Binding Sum}
\author{Sebastian Durst}
\address[S.~Durst]{
Fraunhofer-Institut f\"ur Hochfrequenzphysik und Radartechnik FHR\\
Fraunhoferstr. 20\\ 53343 Wachtberg\\ Germany}
\email{sdurst@math.uni-koeln.de}
\author{Mirko Klukas}
\address[M.~Klukas]{MIT Department of Brain and Cognitive Sciences\\ 43 Vassar St.\\ Cambridge MA 02139\\ USA}
\email{mirko.klukas@gmail.com}
\subjclass[2010]{57R17 (primary), and 57R65 (secondary)} 

\selectlanguage{english}

\begin{abstract}
We introduce the notion of a \emph{\subopenbook}, a submanifold equipped with
an open book structure compatible with an ambient open book, and describe in detail
the special case of a \emph{push-off} of the binding of an open book. This enables
us to explicitly describe a natural open book decomposition of a fibre connected sum of
two open books along their bindings, provided they are diffeomorphic and 
admit an open book structure themselves.
Furthermore, we apply the results to contact open books, showing that the natural
open book structure of a contact fibre connected sum of two adapted open books
along their contactomorphic bindings is again adapted to the resulting contact structure.
\end{abstract}

\maketitle

%
\section{Introduction}
%
In 1923 Alexander \cite{Alexander1923} proved that every closed oriented $3$-manifold admits a so-called \textit{open book decomposition}. 
In fact, combining the work of Winkelnkemper, Lawson, and Quinn from the 1970s, this statement remains true for odd-dimensional manifolds in general (see \cite{MR0310912}, \cite{MR0494132}, and \cite{MR528236} respectively). The existence problem in even dimensions is also
solved in these works but is more involved.
Arguably the most prominent appearance of open books in recent history 
is their surprisingly deep connection with \textit{contact structures}.
As was observed by Giroux \cite{Giroux} in 2002, contact structures in dimension 3 are
of purely topological nature: he established a one-to-one correspondence between
isotopy classes of contact structures and open book decompositions up to positive
stabilisation. This correlation remains partly intact in higher dimensions.
According to Giroux and Mohsen \cite{GirouxMohsen} 
any contact structure on a closed manifold of dimension at least 3 
admits a \textit{compatible} open book decomposition. 
In particular, its binding inherits an induced contact structure and thus possesses a compatible open book itself.

In this paper we investigate how the \textit{fibre connected sum} performed along diffeomorphic binding 
components of an open book, henceforth called \textit{binding sum}, affects an underlying open book structure. 
We discuss both regular fibre sums and fibre sums of contact manifolds.
While, at first glance, the binding sum destroys the open book structure, we show that this is in fact not the case and generalise a previous result of the second author \cite{Klukas} to higher dimensions.
The main results can be summarized as follows:

\begin{list}{}{}
\item[\textbf{I.}] \textbf{Existence:} The fibre connected sum along diffeomorphic binding 
components of an open book admits a \textit{natural} open book structure, provided the binding components admit open book structures themselves (see Theorem~\ref{thm:main1} for a detailed statement).
\item[\textbf{II.}] \textbf{Compatibility:} 
Our construction can be adapted to an underlying contact structure and again produces a compatible open book (see \ Theorem~\ref{thm:contact}).
\end{list}

The open book is \textit{natural} in the sense that
it will be described in terms 
of the original open book, and a fixed open book decomposition of the
binding components 
(see Theorem~\ref{thm:main1} for details).
The idea of the construction is to form the fibre connected sum not along the
binding components themselves but along slightly isotoped copies, called
the \emph{push-offs}, realising them as \emph{nested open books}.
A nested open book is a submanifold inheriting an open book structure from
the ambient manifold and is thus a natural generalisation of a
\emph{spinning} as discussed in contact topology by
Mori \cite{Mori2004} and Mart\'inez Torres \cite{Torres2011}.
A nice survey on topological spinnings is \cite{Friedman2005}.

\begin{remark}
The first result remains true if the two binding components admit fibrations over the circle (that is, informally, if they admit similar open book decompositions with binding the empty set). In particular, the statement holds in the $3$-dimensional case, where the binding components are circles. 
\end{remark}

Note that, according to the above mentioned work of  Winkelnkemper, Lawson, and Quinn, the condition on the binding components to admit open books themselves is not a restriction in odd dimensions. For the same reasons
the sheer existence of an open book of the fibre sum in odd dimensions is already answered as well.
According to Giroux and Mohsen the same is true for the case of adapted open books and contact structures.
However, constructions of particular instances of those open books and their relationship to the original open book decomposition have rarely been discussed in the literature yet.
One notable exception is Mori's construction of contact structures and leaf-wise symplectic foliations on $S^4\times S^1$ arising as the fibre connected sum of two copies of $S^5$ (\cite{Mori2012}, see Section~\ref{section:contact_examples}).
Besides, very few constructions of open books supporting contact structures in higher dimensions are known.
Along these lines we explain how binding sums can be utilised to describe 
fibrations over the circle whose fibres are convex hypersurfaces in the sense of Giroux, and manifolds that admit 
the higher-dimensional analogue of \textit{Giroux torsion} introduced by Massot, Niederkr\"uger and Wendl \cite{MNW13} (cf.\ Section~\ref{section:contact_examples}). Our construction thus yields open book decompositions for both of these two classes of contact manifolds.

\subsection{Acknowledgments}
We thank an anonymous referee for bringing Mori's construction to our attention.

%
\section{Preliminaries}
%

\subsection{Open book decompositions}
\label{section:prelim_open_books}

An \defterm{open book decomposition} of an $n$-dimensional manifold $M$ is a pair
$(B,\pi)$, where $B$ is a co-dimension $2$ submanifold in $M$, called the
  \defterm{binding} of the open book, and
  $\pi\colon\thinspace M\setminus B \to S^1$ is a (smooth, locally trivial)
  fibration such that
  each fibre $\pi^{-1}(\theta)$, $\theta\in S^1$, corresponds to the interior
  of a compact hypersurface $\Sigma_\theta \subset M$ with
  $\partial\Sigma_\theta$ equal to $B$,
	and the binding has a tubular neighbourhood which is trivialised by $\theta$.
  The hypersurfaces $\Sigma_\theta$, $\theta \in S^1$, are called the
  \defterm{pages} of the open book. We will say that $M$ admits an \textbf{open book structure}.
\parskip 0pt

\begin{figure}[htb]
\includegraphics[width=1.\textwidth]{./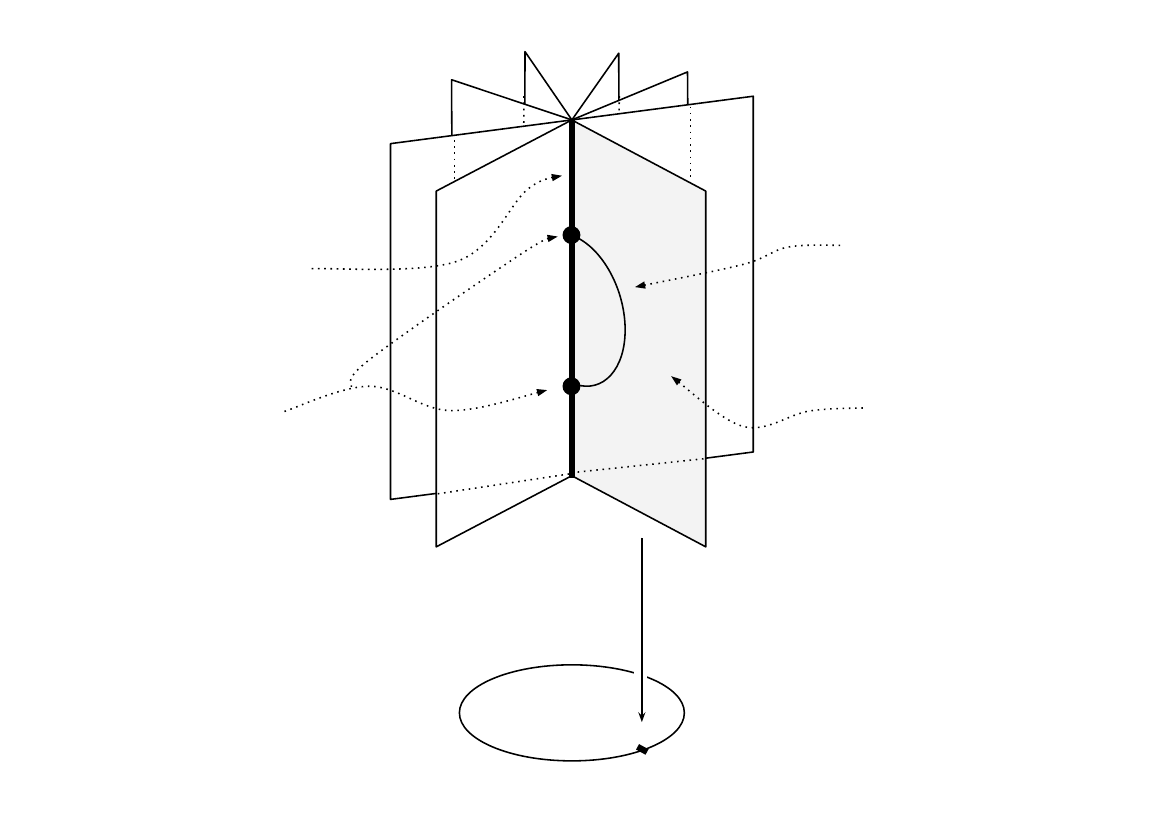}
\put(-275, 165){$B$}
\put(-286, 120){$B'$}
\put(-93, 121){$\Sigma_\theta$}
\put(-100, 170){$\Sigma'_\theta$}
\put(-157, 53){$\pi$}
\put(-157, 12){$\theta$}
\caption{Schematic picture of an open book. A single page of a nested open book and its nested binding is indicated.}
 \label{fig:ob_sphere_disc}
\end{figure}

The question of which data is relevant to remodel the ambient manifold and the underlying open book structure up to diffeomorphism leads us to the following notion.

An \defterm{abstract open book} is a pair $(\Sigma,\phi)$, where $\Sigma$ is a compact manifold with non-empty boundary $\partial \Sigma$, called the \defterm{page}, and $\phi\colon\thinspace \Sigma \to \Sigma$ is a diffeomorphism equal to the identity near $\partial \Sigma$, called the \defterm{monodromy} of the open book.
Let $\Sigma(\phi)$ denote the mapping torus of $\phi$, that is, the quotient space obtained from $\Sigma \times [0,2\pi]$ by identifying $(x, 2\pi)$ with $(\phi(x),0)$ for each $x \in \Sigma$. Then the pair $(\Sigma,\phi)$ determines a closed manifold $M_{(\Sigma,\phi)}$ defined by
\begin{equation}
\label{eqn:abstract open book}
			M_{(\Sigma,\phi)} := \Sigma(\phi) \cup_{\id} (\partial \Sigma \times D^2),
\end{equation}
where we identify $\partial \Sigma(\phi) = \partial \Sigma \times S^1$ with $\partial (\partial \Sigma \times D^2)$ using the identity map.
Let $B \subset M_{(\Sigma,\phi)}$ denote the embedded submanifold $\partial \Sigma \times \{0\}$. Then we can define a fibration $\pi\colon\thinspace M_{(\Sigma,\phi)}\setminus B \to S^1$ by
\[
	\left.
	 \begin{array}{l}
				\lbrack x,\theta \rbrack  \\
				 \lbrack x', r \textup{e}^{i\theta} \rbrack
		\end{array} \right\}
							\mapsto [\theta],
\]
where we understand $M_{(\Sigma,\phi)}\setminus B$ as decomposed as in (\ref{eqn:abstract open
book}) and $[x,\theta] \in \Sigma(\phi)$ or $ [x', r\textup{e}^{i\theta}] \in
\partial\Sigma \times D^2 \subset \partial\Sigma \times \mathbb{C}$. Clearly, $(B,\pi)$ defines an open book decomposition of $M_{(\Sigma,\phi)}$.
\parskip 0pt

On the other hand, an open book decomposition $(B,\pi)$ of some $n$-manifold
$M$ defines an abstract open book as follows: identify a neighbourhood of $B$
with $B \times D^2$ such that $B = B\times\{0\}$ and such that the fibration on
this neighbourhood is given by the angular coordinate, $\theta$ say, on the
$D^2$-factor. We can define a $1$-form $\alpha$ on the complement $M \setminus (B \times
D^2)$ by pulling back $d\theta$ under the fibration $\pi$, where this time we
understand $\theta$ as the coordinate on the target space of $\pi$.
The vector field $\partial_\theta$ on $\partial\big(M \setminus (B \times D^2)
\big)$ extends to a nowhere vanishing vector field $X$ which we normalise by
demanding it to satisfy $\alpha(X)=1$. Let $\phi$ denote the time-$2\pi$ map of the
flow of $X$. Then the pair $(\Sigma,\phi)$, with $\Sigma = \overline{\pi^{-1}(0)}$, defines an abstract open book such that $M_{(\Sigma,\phi)}$ is diffeomorphic to $M$.

Nice surveys on open books and their applications are Winkelnkemper's appendix to
Ranicki's book \cite{Ranicki98} and Giroux \cite{GirouxWhatIs}. More detailed material, in particular on the relation of open books and contact structures can be found in \cite{Etnyre2004,Geiges2008,Koert2010}.

\subsection{The fibre connected sum and the binding sum}
\label{sec:fibre_connected_sum}
We will briefly introduce the fibre connected sum, which is a method to construct manifolds using embedded submanifolds; for details see \cite[Section~7.4]{Geiges2008}.
Let $M'$ and $M$ be closed oriented manifolds and let $j_0$ and $j_1$ be embeddings of $M'$ into $M$ with disjoint images. Assume that there exists a bundle isomorphism $\Psi$ of the corresponding normal bundles $N_0$ and $N_1$ over $j_1 \circ j_0^{-1}|_{j_0(M')}$ that reverses the fibre orientation. Picking a bundle metric on $N_0$ and choosing the induced metric on $N_1$ turns $\Psi$ into a bundle isometry. We furthermore identify open disjoint neighbourhoods of the $j_i(M')$ with the normal bundles $N_i$.

The \defterm{fibre connected sum} is the quotient manifold
$$
\#_\Psi M := \Big(  M\setminus \big( j_0(M') \cup j_1(M') \big)  \Big)/_\sim,
$$
where $v \in N_0$ with $0 < \| v \| < \epsilon$ is identified with $\frac{\sqrt{\epsilon^2 - \| v \|^2}}{\| v \|}\Psi(v)$.
It is worth noting that the construction can be adapted to work in the symplectic and contact setting if the dimensions of $M$ and $M'$ differ by two, see Section 7.2 in \cite{McDuff1998} and Theorem 7.4.3 in \cite{Geiges2008} for details.

Let $M$ be a (not necessarily connected) smooth $n$-dimensional manifold with open book decomposition
$(\Sigma, \phi)$ whose binding $B$ contains two diffeomorphic components $B_0,B_1$
with diffeomorphic open book decompositions $(\Sigma^\prime, \phi^\prime)$.
Their normal bundles $\nu B_0$ and $\nu B_1$ admit trivializations induced by the
pages of the open book decomposition of $M$. Let $\Psi$
denote the fibre orientation reversing diffeomorphism of
$B \times D^2 \subset B \times \mathbb{C}$ sending $(b,z)$ to $(b,\bar z)$.
Hence, we can perform the fibre connected sum along $B_0$ and $B_1$ with 
respect to the above trivializations of the normal bundles and the map induced by $\Psi$ 
and denote the result by
\[
	\#_{B_0,B_1} M.
\]
We call it the \defterm{binding sum} of $M$ along $B_0$ and $B_1$.

An interesting case occurs when $M$ is disconnected and decomposes into two manifolds 
$M_0$ and $M_1$ with open books admitting diffeomorphic bindings, denoted by $B$ say. 
In this case we denote the binding sum along the receptive copies of $B$ in $M_0$ and $M_1$
by 
\[
	M_0 \#_{B} M_1.
\]
Note that $M_0 \#_{B} M_1$ admits the structure of a fibration over the circle with fibre given by
\[
			(-\Sigma_0) \cup_B \Sigma_1,
\]
where $\Sigma_0$ and $\Sigma_1$ are the pages of the open book for $M_0$ and $M_1$ respectively. 	In the contact setting each fibre defines a \emph{convex} hypersurface, i.e.\
there exists a contact vector field on $(M_0,\xi_0)\#_B (M_1,\xi_1)$ which is
transverse to the fibres. Furthermore for each fibre $(-\Sigma_0) \cup_B \Sigma_1$
the contact vector field is tangent to the contact structure exactly over $B$
(cf.\ Section~\ref{section:circle_fibrations}).

As stated in the introduction, this paper is concerned with the question whether the binding sum admits again
the structure of an
open book and how it is related to the original open books. 

%
\section{Nested open books}
\label{section:sub_open_book}
%
In this section we turn our attention to a special class of submanifolds 
and introduce the notion of a \emph{nested open book}, i.e.\ a
submanifold carrying an open book structure compatible with the open book
structure of the ambient manifold.
We also discuss fibre connected sums
in this context.

Let $M$ be an $n$-dimensional manifold supported by an open 
book decomposition $(B,\pi)$. Let $M'\subset M$ be a 
$k$-dimensional submanifold which on its part is supported by an open book
decomposition $(B',\pi')$ such that
\[
					\pi|_{M'\setminus B'} = \pi'.
\]
Note that $B'$ necessarily defines a $(k-2)$-dimensional submanifold in $B$.
We will always assume
that $M'$ intersects the binding $B$ transversely.
We refer to $M'$, as well as to $(B',\pi')$, as a \defterm{nested open book} of $(B,\pi)$. 

Let $(\Sigma, \phi)$ be an abstract open book and $\Sigma' \subset \Sigma$ a properly embedded
submanifold intersecting $\partial\Sigma$ exactly in its boundary $\partial\Sigma'$ with the intersection being transverse.
We call $(\Sigma', \phi|_{\Sigma'})$ an \defterm{abstract nested open book}
if $\Sigma'$ is invariant under the monodromy $\phi$.
The equivalence of the two definitions follows analogously to the
equivalence of abstract and non-abstract open books.
If not indicated otherwise, we will assume the normal bundle of any
nested open book used in the present paper to be trivial.

\begin{example}
Consider a $k$-disc $D^k \subset D^n$ inside an $n$-disc $D^n$ coming from the natural inclusion $\R^k \subset \R^n$.
This realises
$S^{k+1} \cong (D^k, \id)$ as a nested open book of $S^{n+1} \cong (D^n, \id)$.
The case $k=1$ and $n=2$ is depicted in Figure~\ref{fig:ob_sphere_disc}.
For $k = n-2$, the nested $S^{n-1}$ is a \textit{push-off}, as will be defined in Section~\ref{section:push-off}, of the binding of $(D^n, \id)$.
\end{example}

%
\subsection{Fibre sums along \subopenbook s}
%
For the remainder of the section, assume the co-dimension of the nested open books to be two.
We will show that the fibre connected sum operation of two open books along
diffeomorphic nested open books carries an open book structure with page a fibre
connected sum of the original pages along the nested bindings.

Let $M'$ be an $(n-2)$-dimensional manifold supported by an open book $(\pi^\prime, B^\prime)$, and let 
$j_0,j_1 \colon\thinspace M^\prime \hookrightarrow M$ be two disjoint embeddings
defining nested open books of $M$ such that their images admit isomorphic normal
bundles $N_i$.
We denote by $M^\prime_i := j_i(M^\prime)$ the embedded copies of the nested open book $M^\prime$ and by
$B^\prime_i := j_i(B^\prime)$ their respective bindings. Finally let $\pi^\prime_i := \pi' \circ j_i^{-1}$ denote the induced open book fibration on $M_i^\prime \setminus B_i^\prime$ and let $(\Sigma'_i)_\theta$ denote their pages.

Given an orientation reversing bundle isomorphism $\Psi$ of the normal bundles
$\nu M^\prime_i$, we can perform the fibre connected sum $\#_{\Psi} M$.
We only have to ensure that the fibres of the normal bundles of $M'_0$ and $M'_1$ lie within the pages of $(\pi, B)$. In particular, we require the fibres over the \subbinding s to lie within the binding of $M$.
Moreover, we require the isomorphism $\Psi$ of the normal bundle to respect the  open book structure of $M$ (which implies that it is compatible with the \subopenbook \  structures of $M'_0$ and $M'_1$ as well), i.e.\ $\Psi$ to satisfies $\pi \circ \Psi = \pi$.
Now, an open book structure of $\#_\Psi M$ is given as follows.
\begin{lemma}
\label{lemma:sum_sub_ob}
The original fibration $\pi \colon\thinspace M \setminus B \to S^1$ 
descends to a fibration 
$$
	\Pi \colon\thinspace \#_{\Psi} M  \setminus \#_{\Psi|_{\nu B_0^\prime}} B  \to S^1.
$$
In particular, the new binding is given by the fibre connected sum $\#_{\Psi|_{\nu B_0^\prime}} B $ of the binding along the \subbinding s (with respect to the isomorphism of $\nu B'_i \subset TB$ induced by $\Psi$), and the pages of the open book are given by the (relative) fibre sum of the original page along the \subpage s (with respect to the isomorphism of $\nu {\pi'_i}^{-1}(\theta) \subset T\pi^{-1}(\theta)$ induced by $\Psi$), i.e.\ 
$ 
			 \overline{\Pi^{-1}(\theta)}  = 
						 \#_{\Psi|_{\nu (\Sigma_{0}')_\theta}} \Sigma_\theta .
$
\qed
\end{lemma}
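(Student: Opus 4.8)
The plan is to work with the second, ``boundary-gluing'' description of the fibre connected sum given in Section~\ref{sec:fibre_connected_sum}: we excise the interiors of closed tubular neighbourhoods $N_0, N_1$ of the two nested open books $M'_0, M'_1$ and identify $\partial N_0$ with $\partial N_1$ via $\Psi$. The guiding principle is that the hypothesis $\pi \circ \Psi = \pi$ forces every step of this construction to respect the ambient fibration, so that the whole operation can be carried out fibrewise over $S^1$. First I would show that $\pi$ descends. On $M \setminus (N_0 \cup N_1)$ the map $\pi$ is already defined and is a fibration away from $B$; since we identify $p \in \partial N_0$ with $\Psi(p) \in \partial N_1$ and $\pi(\Psi(p)) = \pi(p)$, the map $\pi$ is constant on identified pairs and hence descends to a well-defined map $\Pi$ on the quotient. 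Local triviality of $\Pi$ is immediate away from the gluing locus; near it one uses that the identification takes place inside a single fibre of $\pi$ (because $\Psi$ preserves $\pi$), so a local trivialisation of $\pi$ on a neighbourhood of $\partial N_0 \cup \partial N_1$ descends to one of $\Pi$.

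Next I would identify the binding. By assumption the fibres of $N_i$ lie inside the pages of $(B,\pi)$, and over the \subbinding s $B'_i$ they lie inside $B$; hence $N_i \cap B$ is precisely a tubular neighbourhood $\nu B'_i$ of $B'_i$ inside $B$, and $\Psi$ restricts to a fibre-orientation-reversing isomorphism of these. The points of $B$ surviving into the quotient, together with this identification, therefore form exactly the fibre connected sum $\#_{\Psi|_{\nu B'_0}} B$, and one checks that $\Pi$ fails to be a submersion precisely along this locus. This establishes the first two assertions. To identify the pages I would intersect the entire construction with a single fibre. Fixing $\theta \in S^1$, the relation $\pi \circ \Psi = \pi$ guarantees that $\Psi$ maps $N_0 \cap \pi^{-1}(\theta)$ to $N_1 \cap \pi^{-1}(\theta)$, and $N_i \cap \pi^{-1}(\theta)$ is a tubular neighbourhood $\nu(\Sigma'_i)_\theta$ of the \subpage\ inside $\Sigma_\theta$. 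Hence the gluing restricts to an identification of the boundaries of these neighbourhoods within $\Sigma_\theta$, so that $\overline{\Pi^{-1}(\theta)}$ is obtained from $\Sigma_\theta$ by removing $\nu(\Sigma'_0)_\theta \cup \nu(\Sigma'_1)_\theta$ and gluing via $\Psi|_{\nu(\Sigma'_0)_\theta}$; this is by definition the relative fibre sum $\#_{\Psi|_{\nu(\Sigma'_0)_\theta}} \Sigma_\theta$.

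The main obstacle is the bookkeeping of the several normal directions at once. The rank-$2$ normal bundle $N_i$ of $M'_i$ in $M$ must restrict compatibly to the (rank-$2$) normal bundle of the \subpage\ inside the page and to that of the \subbinding\ inside the binding, so that the single isomorphism $\Psi$ simultaneously induces the binding sum, the page sum, and the descended fibration, and so that the three resulting structures fit together (in particular, $\partial\,\overline{\Pi^{-1}(\theta)}$ should equal the new binding $\#_{\Psi|_{\nu B'_0}} B$). Verifying that this splitting is consistent is exactly where the two standing hypotheses---that the normal fibres lie within the pages and that $\pi \circ \Psi = \pi$---are used, and once they are in place the orientation-reversing property of $\Psi$ passes to each restriction, so that all the relevant fibre sums are genuinely defined and oriented.
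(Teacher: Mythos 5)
Your argument is correct and is exactly the reasoning the paper intends: the lemma is stated with an immediate \qed because the authors regard it as a direct consequence of the standing hypotheses (normal fibres lying in pages, fibres over the nested bindings lying in $B$, and $\pi\circ\Psi=\pi$), and your fibrewise boundary-gluing verification simply writes out that omitted argument. No discrepancies to report.
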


In the following we are going to extract the remaining information to express $\#_\Psi M$ in terms of an abstract open book, that is we describe a recipe to find the monodromy.
Let $X$ be a vector field transverse to the interior of the ambient pages, vanishing on the binding, and normalised by $\pi^*d\theta(X) = 1$. Recall from Section~\ref{section:prelim_open_books} that the time-$2\pi$ map $\phi$ of the flow of $X$ yields the monodromy of the ambient open book.
Furthermore, if we assume that $X$ is tangent to the submanifolds $M^\prime_i$, we obtain abstract \subopenbook \ descriptions $(\Sigma_i, \phi_i)$ of $M^\prime_i$ within the abstract ambient open book $(\Sigma, \phi)$.
Moreover, by adapting the vector field if necessary, we can choose embeddings of the normal bundles of $M^\prime_i$ such that the fibres are preserved under the flow of $X$.
The normal bundles of $M^\prime_0$ and $M^\prime_1$ being isomorphic
translates into the condition that 
the normal bundles $\nu \Sigma^\prime_i$ of the induced (abstract) nested pages
in the ambient (abstract) page $\Sigma$ are $\phi$-equivariantly isomorphic.

For the remaining part of the section we identify $\nu M'_i$ with the quotient 
$$\big( \nu \Sigma'_i \times [0,2\pi] \big) / \sim_{\phi}.$$
Now let $\Psi_0$ be the $\phi$-equivariant fibre-orientation reversing isomorphism of $\nu \Sigma'_i$
induced by the restriction of $\Psi$.
Moreover we define 
\[
	\Psi_t := \Psi|_{\nu \Sigma'_0 \times \{t\}}.
\]
Note that each $\Psi_t$ is isotopic to $\Psi_0$, the whole family $\{ \Psi_t \}_t$ however defines an (\emph{a priori}) non-trivial loop of maps $\nu \Sigma'_0 \to \nu \Sigma'_1$ based at $\Psi_0$.
By choosing suitable bundle metrics,
this loop yields an (\emph{a priori}) non-trivial loop $\{\twistmap_t\}_t$ of maps $\Sigma' \to S^1$ based at the identity via 
\[
			\twistmap_t(x)\cdot \Psi_0(q) := \Psi_t(q),
\]
for $x \in \Sigma'$ and $q \neq 0$ a non-trivial point in the normal-fibre over $x$.
With this in hand we can define a monodromy-like map of $\nu \Sigma'_1$ which is the identity in a neighbourhood of the zero section and outside the unit-disc bundle by
\[
	\twistmap(q) := \twistmap_{\boldsymbol{r}(x)}\cdot q,
\]
where $\boldsymbol{r}$ is a radial cut-off function in the fibre which is $1$ on the
zero section and vanishes away from it.
We call it the \defterm{twist map} induced by $\phi$ and $\Psi$.
Given this map we can now give an abstract description of the open book in Lemma~\ref{lemma:sum_sub_ob}. Recall that we already identified the page as the fibre sum of the original page along the \subpage s.

\begin{lemma}
\label{lem:sum_abstract_sub}
Let $\Psi_0$, $\phi$ and $\twistmap$ be the maps described in the above paragraph.
Then the monodromy of the open book in Lemma~\ref{lemma:sum_sub_ob} is given by $\phi \circ \twistmap$,
and the page is $\#_{\Psi_0} \Sigma$.
\qed
\end{lemma}

%
\section{The push-off}
\label{section:push-off}
%

In this section we describe a \emph{push-off} of the binding of an open book
which realises it as a \subopenbook . The push-off construction will enable us
to describe a natural open book structure on the fibre connected sum of two open
books along their diffeomorphic bindings.
We will first describe how the binding is being pushed away from itself and then
introduce a natural framing of the pushed-off copy in Subsection~\ref{section:framings},
which will be equivalent to the canonical \emph{page framing} of the binding.
In Subsection~\ref{section:monodromy} we show that the push-off can be realised
as an \emph{abstract \subopenbook }.

\begin{figure}[htb]
\includegraphics[width=1.\textwidth]{./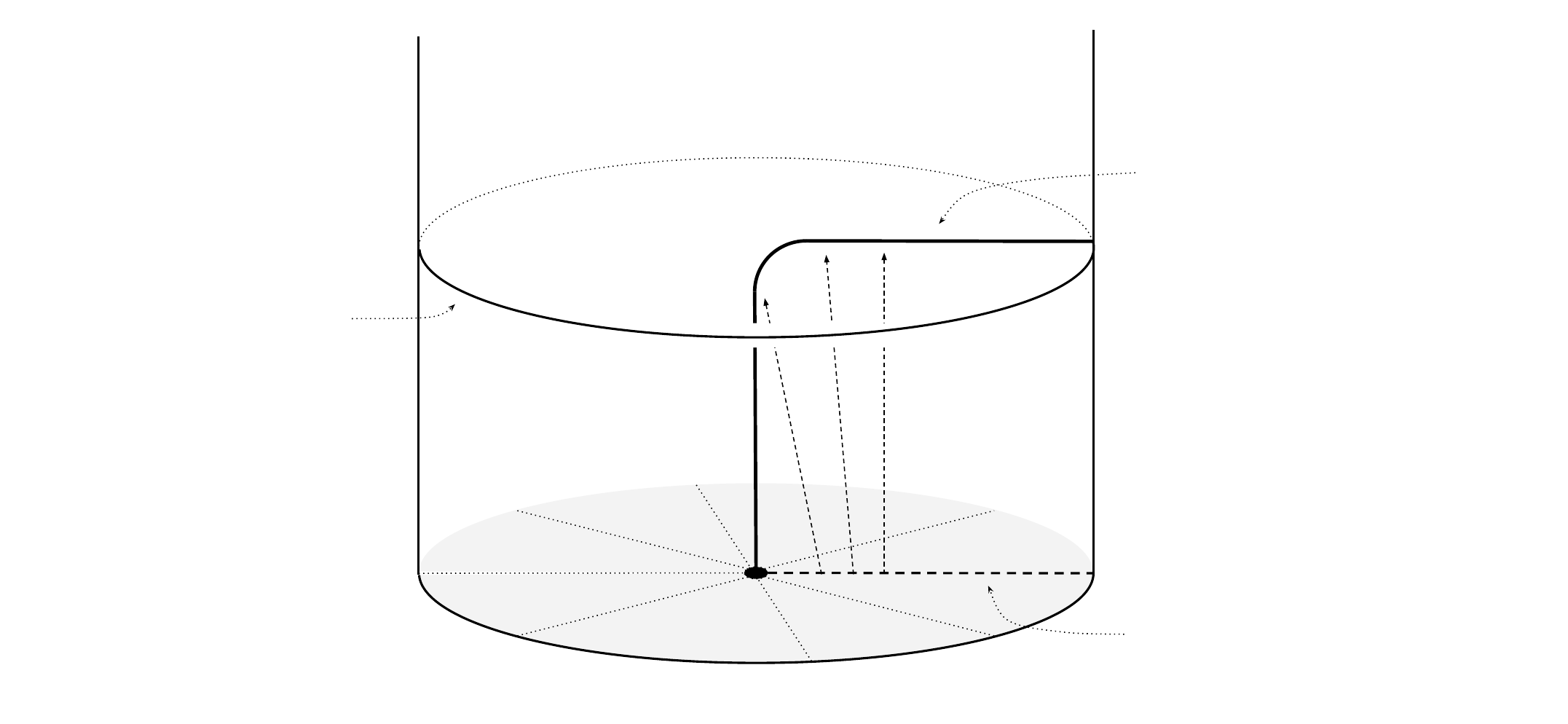}
\put(-305, 92){$r = c$}
\put(-280, 132){$\Sigma_\theta$}
\put(-280, 30){$B$}
\put(-96, 125){$B^+ \cap \Sigma_\theta$}
\put(-100, 18){$\Sigma'_{\theta} \subset B$}
\caption{The page $\Sigma'_\theta$ pushed into $\Sigma_\theta$}
\label{fig:push-off_single_page}
\end{figure}

Let $M$ be a manifold with open book decomposition $(\Sigma, \phi)$
and binding $B$ which also admits an open book decomposition
$(\Sigma^\prime, \phi^\prime)$. We denote the fibration maps by
$\pi \colon\thinspace M \setminus B \rightarrow S^1$ and
$\pi' \colon\thinspace B \setminus B' \rightarrow S^1$, respectively.
Our aim is to define a push-off $B^+$ of the binding $B$ in such a way that
each page $\Sigma^\prime_\theta$ of the binding open book is pushed into
$\Sigma_\theta$, the page corresponding to the same angle $\theta$
in the ambient open book.
As all our constructions are local in a neighbourhood of the binding $B$,
we can assume, without loss of generality, that $\phi$ is the identity.

Identify a neighbourhood of the binding $B^\prime \subset B$ of the open book
of the binding $B$ with $B' \times D^2$ with coordinates $(b', r', \theta')$ such that
$(r', \theta')$ are polar coordinates on the $D^2$-factor and $\theta'$ corresponds to the fibration $\pi'$ -- these are standard coordinates for a neighbourhood of a binding of an open book.
We will also use Cartesian coordinates $x'$, $y'$ on the $D^2$-factor.
Analogously, we have coordinates $(b,r,\theta)$ in a neighbourhood
of $B \subset M$ with the corresponding properties.
Combining these, we get two sets of coordinates on $(B' \times D^2) \times D^2 \subset M$:
\[ 
(b', r', \theta', r, \theta) \ \text{ and } \ (b', x', y', x, y).
\] 
First, we will describe the geometric idea of the push-off by considering 
just a single page $\Sigma_\theta$ of the open book
before defining it rigorously afterwards,
see Figure \ref{fig:push-off_single_page}.
The page $\Sigma'$ of the binding open book corresponding to angle $\theta$ is pushed into the page $\Sigma$ of the ambient open book corresponding to the same angle $\theta$.
Restricted to a single page of the binding open book,
the push-off depends on the radial direction $r'$ only
and is invariant in the $B'$-component.
In particular, the boundary of the page $\Sigma'$ stays fixed.
We divide the collar neighbourhood in $\Sigma'$ into four parts
by the collar parameter $r'$.
The outermost one consisting of points in $\Sigma'$ with $r' \leq \epsilon_1$
is mapped to run straight into the $r$-direction of the ambient page $\Sigma$.
The innermost part consisting of points with $r' \geq \epsilon_3$
is translated by a constant $c$ into the $r$-direction.
This translation is extended over the whole of $\Sigma'$.
On the rest of the collar the push-off is an interpolation between these
innermost and outermost parts. This is done such that
points with $\epsilon_1 \leq r' \leq \epsilon_2$ are used to interpolate in
$r$-direction and points with $\epsilon_2 \leq r' \leq \epsilon_3$ in
$r'$-direction.

Let $f, h \colon\thinspace \R^+_0 \rightarrow \R$ be the smooth functions described in Figure~\ref{fig:funtions_def_push-off}, i.e.\ they have the following properties:
 \begin{itemize} 
 \item $f(r) = 0$ for $r \leq \epsilon_1$ and $f(r) = r$ for $r \geq \epsilon_3$,
 \item $f'(r) > 0$ for $r > \epsilon_1$,
 \item $h(0) = 0$, $h'(0) = 1$ and $h(r) \equiv c$ for $r \geq \epsilon_2$,
\item $h(r) > 0$ for $r > 0$ and $h'(r) > 0$ for $0 < r < \epsilon_2$.
\end{itemize}
By choosing $f$ small between $\epsilon_1$ and $\epsilon_2$, the ``curved part"
of the push-off can be realised in arbitrarily small.
Recall that $B$ can be decomposed as $(B' \times D^2) \cup \Sigma'(\phi')$.
Let $g\colon\thinspace B \rightarrow B \times D^2 \subset M$ be the embedding defined by
$$
g(b) =
\begin{cases}
\big( (b', f(r') \cdot e^{i\theta'}), h(r') \cdot  e^{i\theta'} \big) & \text{for } b = (b', r'e^{i\theta'}) \in B' \times D^2 \\
\big( [x', \theta'] , c \cdot  e^i{\theta'} \big) & \text{for } b = [x', \theta'] \in \Sigma'(\phi').
\end{cases}
$$

Observe that $g$ is well-defined and a smooth embedding.

\begin{definition}
\label{def:push_off}
We define the \defterm{push-off $B^+$ of $B$} as the image of the embedding $g$ defined above, i.e. we define
\[
		B^+ := g(B).
\]
Observe that we can easily obtain an isotopy between the binding $B$
and the push-off $B^+$ by parametrising $f$ and $h$.
\end{definition}

\begin{figure}[htb]
\centering
\def\svgwidth{0.8\columnwidth}
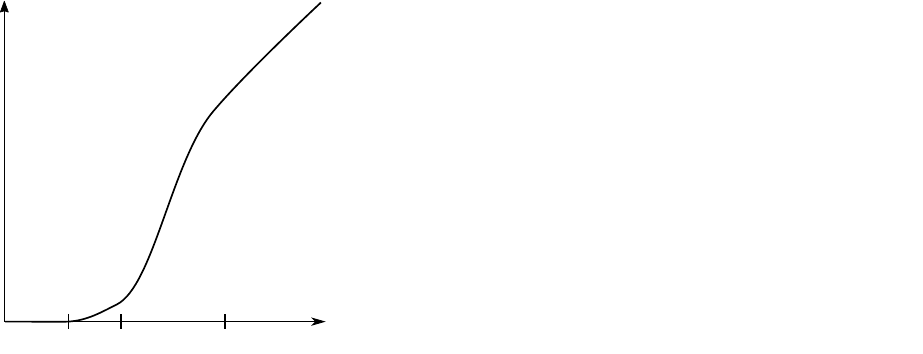
\caption{The functions $f$ and $h$}
\label{fig:funtions_def_push-off}
\end{figure}

\begin{remark}
We call the submanifold $B^+$ a \emph{push-off} of $B$ although it is not
a push-off in the usual sense since $B$ and $B^+$ are not disjoint. 
However, it generalizes the notion of a push-off 
of a \textit{transverse} knot in a contact manifold.
Note that $B^+ \cap \Sigma_\theta \cap \{ r = c \}$ is a copy of the
interior of the page $\Sigma'_\theta$ of the binding and
$B^+ \cap \Sigma_\theta \cap \{ r = r_0\} \cong B' \times \{ r_0 \}$ with $r_0 < c$.
\end{remark}

%
\subsection{Framings}
\label{section:framings}
%

The fibre connected sum explained in
Section~\ref{sec:fibre_connected_sum} requires the submanifolds to have
isomorphic normal bundles and explicitly uses a given bundle isomorphism.
The binding of an open book has trivial normal bundle. Hence it is sufficient to
specify a framing,
i.e.\ a trivialisation of its normal bundle,
to be able to perform a fibre connected sum along the binding.
Note that because the codimension of the binding is two, this can be done
by specifying a push-off, or equivalently a
non-zero vector field along the submanifold that is nowhere tangent, by considering
the normal bundle as a complex line bundle.

A natural framing of the binding $B \subset M$ of an open book is the
\defterm{page framing} obtained by pushing $B$ into one fixed page of the
open book. We denote the page framing given by $\partial_x$ by $F_0$, i.e.
\[
			F_0 := \partial_x.
\]
Next we are going to define a framing for the push-off $B^+$. Let $\widetilde{u} \colon\thinspace M \rightarrow \R$ be a smooth function
such that
\begin{itemize}
\item $\widetilde{u} \equiv 0$ near $B$
and on $B' \times \{r' \leq \epsilon \} \times D^2_{c-\epsilon}$,
\item $\widetilde{u} \equiv 1$ on
$B' \times \{ r' \geq \epsilon \} \times \{ r = c \}$
and outside $\{ r \leq c + \epsilon\}$,
\item $\widetilde{u}$ is monotone in $r'$- and $r$-direction.
\end{itemize}
With this in hand we define a framing of the push-off $B^+$ by
$$
\label{def:F_1}
F_1 := -(1-\widetilde{u}) \partial_{x'}
- \widetilde{u} \cdot (\sin^2 \theta \partial_{x'} - \cos \theta \partial_r).
$$
One easily checks that this is indeed nowhere tangent to $B^+$.
The push-off $B^+$ with the framing $F_1$ is in fact equivalent
to the binding $B$ with its natural page framing $F_0$.

\begin{lemma}
\label{lemma:push-off_isotopic_to_binding}
The framed submanifolds $(B, F_0)$ and $(B^+, F_1)$ are isotopic.
\end{lemma}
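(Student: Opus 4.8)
The plan is to upgrade the isotopy that already connects $B$ to $B^+$ into an isotopy through \emph{framed} submanifolds. By Definition~\ref{def:push_off}, parametrising the functions $f$ and $h$ gives a smooth isotopy $g_s\colon B\to M$, $s\in[0,1]$, from the inclusion of $B$ onto the zero section $B\times\{0\}$ to the push-off embedding $g_1=g$. Since $B$ is a closed submanifold, the same extension technique already used in Section~\ref{section:one_handles} (integrating a time-dependent vector field supported near $B$) lets me extend $g_s$ to an ambient isotopy $\Phi_s$ of $M$. Transporting the page framing along it, the pair $(\Phi_s(B),\Phi_{s*}F_0)$ is then automatically a path of framed submanifolds from $(B,F_0)$ to $(B^+,\Phi_{1*}F_0)$. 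Because $B^+$ has codimension two and trivial normal bundle, a framing is just a nowhere-tangent normal field, and two such are equivalent exactly when the induced maps $B^+\to S^1$ are homotopic. Hence the whole lemma reduces to one assertion: the transported framing $\Phi_{1*}F_0$ coincides, up to homotopy through nowhere-tangent normal fields, with the explicit field $F_1$.

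To carry this out I would read off the two ends of $F_1$ from its defining formula and match them against the geometry of the push-off. Near the inner binding $B'$, where $\widetilde u\equiv 0$, the push-off has been bent to run straight in the ambient $r$-direction (cf.\ the remark after Definition~\ref{def:push_off} that $B^+\cap\Sigma_\theta\cap\{r=r_0<c\}\cong B'\times\{r_0\}$), so the formerly tangent direction $\partial_{x'}$ has become normal to $B^+$; this is exactly why $F_1=-\partial_{x'}$ there, and it is the image of $\partial_x$ under the bending. Away from $B'$, where $r=c$ and $\widetilde u\equiv 1$, the submanifold $B^+$ spirals with $\theta=\theta'$, so a normal field can stay transverse only by rotating with $\theta$; the term $-\sin^2\theta\,\partial_{x'}+\cos\theta\,\partial_r$ is precisely such a rotating field, and $\widetilde u$ interpolates between the two regimes. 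To see that $F_1$ lies in the same framing class as the constant page framing $F_0$, I would compute the winding of the loop $\theta\mapsto(-\sin^2\theta,\cos\theta)$ in the $(\partial_{x'},\partial_r)$-plane; since its first coordinate is never positive, the loop stays in a closed half-plane and has winding number zero about the origin, so no homotopy-theoretic obstruction arises and $\Phi_{1*}F_0\simeq F_1$.

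The crux of the argument, and the step I expect to be the main obstacle, is the transversality bookkeeping: verifying that $F_1$ (and, if one prefers to build the interpolating family by hand rather than via $\Phi_s$, each intermediate field) is genuinely nowhere tangent to $B^+$ (resp.\ to the intermediate submanifolds $g_s(B)$). I would check this region by region along the four collar ranges of the radial parameter $r'$ used to define the push-off (see Figure~\ref{fig:push-off_single_page}): in the innermost range the field is $-\partial_{x'}$ and $B^+$ runs in the $r'$-direction, so transversality is immediate; in the outermost range the $\cos\theta\,\partial_r$-term provides the transverse component to a page that has been bent into the $r$-direction; and on the two interpolating collars one must confirm that the $\partial_{x'}$- and $\partial_r$-coefficients never vanish together and never fall into the tangent plane of $B^+$, using that $\sin^2\theta$ and $\cos\theta$ have no common zero. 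The subtlety is that the tangent plane of the spiralling submanifold varies with both $\theta$ and the collar parameter, so the non-tangency must be checked uniformly across all regions.

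Granting the transversality verification, $(\Phi_s(B),\Phi_{s*}F_0)$ followed by the homotopy from $\Phi_{1*}F_0$ to $F_1$ is a path of framed submanifolds joining $(B,F_0)$ to $(B^+,F_1)$, which is precisely the claim that the two framed submanifolds are isotopic.
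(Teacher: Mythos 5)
Your overall strategy---push $F_0$ forward along an ambient isotopy from $B$ to $B^+$ and then compare $\Phi_{1*}F_0$ with $F_1$ as two sections of the normal circle bundle of $B^+$, the obstruction living in $[B^+,S^1]=H^1(B^+;\mathbb{Z})$---is a legitimate alternative to the paper's argument, which instead places both framings on a common singular \emph{intermediate push-off} $\widetilde{B}$ and checks by an explicit case analysis that the straight-line interpolation $F_h=(1-h)F_0+hF_1$ is nowhere tangent for every $h$. However, your reduction is not actually carried out, and the one computation you offer does not establish what is needed. You never compute $\Phi_{1*}F_0$ away from $B'$: over the interior of $\Sigma'$ the isotopy is essentially a translation in the ambient $D^2$-fibre, so the transported framing is still $\partial_x=\cos\theta\,\partial_r-\tfrac{1}{r}\sin\theta\,\partial_\theta$, and relative to a genuine normal frame of $B^+$ along the $\theta$-loop this winds \emph{once}, not zero times; your implicit premise that the page framing contributes no winding is false. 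Moreover, the plane $(\partial_{x'},\partial_r)$ in which you compute the winding of $F_1$ is not a frame of the normal bundle of $B^+$ along that loop: at $\theta=\theta'\in\{0,\pi\}$ over the interior of $\Sigma'$ one has $\partial_{x'}=\pm\partial_{r'}$, which is tangent to $B^+$, so ``winding number in the $(\partial_{x'},\partial_r)$-plane'' is not the framing invariant. Working modulo $TB^+$ (using that $\partial_\theta+\partial_{\theta'}$ is tangent to $B^+$, so $\partial_{x'}\equiv\tfrac{1}{r'}\sin\theta\,\partial_\theta$), both $F_1$ and the transported page framing wind once in the frame $(\partial_r,\partial_\theta)$ and the \emph{difference} class vanishes---but that cancellation is exactly the computation your sketch omits, and it is the entire content of the lemma.

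Two further points. You identify the ``crux'' as verifying that $F_1$ is nowhere tangent to $B^+$, but the paper asserts this immediately after defining $F_1$, before the lemma is stated; the lemma is about the equivalence of the two framings, not about $F_1$ being a framing. And your route would still owe two justifications: that the single $\theta$-loop detects the whole difference class in $H^1(B^+;\mathbb{Z})$ (true here because both framings and the submanifold are constant in the $B'$- and interior-$\Sigma'$-directions, but it must be said), and that the homotopy class of $\Phi_{1*}F_0$ does not depend on the chosen ambient extension. The paper's interpolation on $\widetilde{B}$ avoids all of this bookkeeping at the cost of the direct tangency check in the two regimes $t\geq\tfrac12$ and $t\leq\tfrac12$; if you want to keep your approach, the honest version is to express both framings in one normal frame along the $\theta$-loop and show the difference of winding numbers is zero.
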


\begin{proof}
This is a direct calculation, see \cite{KoelschDurst} for details.
\end{proof}

%
\subsection{The push-off as an abstract \subopenbook }
\label{section:monodromy}
%

The push-off $B^+$ is clearly an embedded \subopenbook \ of $M = (B, \pi)$.
The aim of this section is to obtain a description of the push-off as
an \emph{abstract} \subopenbook , and ultimately as a \emph{framed} abstract \subopenbook \ 
by altering the monodromy of the abstract open
book $(\Sigma, \phi)$.

Identify a neighbourhood of $B' \subset B$ with $B' \times D^2$ as above,
i.e.\ the pages are defined by the angular coordinate $\theta'$.
Also denoting the coordinate on $S^1$ by $\theta'$, we can define a
non-vanishing $1$-form on $B\setminus B'$ by the pull--back of $d\theta'$
under the fibration map $\pi' \colon\thinspace B\setminus B' \rightarrow S^1$.
With the help of this $1$-form we can extend the vector field
$\partial_{\theta'}$ to a vector field $X'$ on $B$ by
prescribing the condition $(\pi')^* d\theta' (X') = 1$.
The vector field $X'$ can furthermore be extended trivially to a neighbourhood
$B\times D^2$ of $B$ in $M$.
Likewise, 
we obtain an abstract open book description of $M$ by
regarding the time-$2\pi$ map of a suitable vector field on $M \setminus B$.
Let $X$ denote the vector field that recovers the abstract open book
$(\Sigma, \phi)$.
Let $u \colon\thinspace \R^+_0 \rightarrow \R$ be the smooth function depicted
in Figure~\ref{fig:function_u} with $c$ as in the definition
of the push-off $B^+$.
Then $\widetilde{X} := X + u(r)X'$ on $M\setminus B$ defines a vector field on
$M\setminus B$.

\begin{figure}[htb]
\centering
\def\svgwidth{0.5\columnwidth}
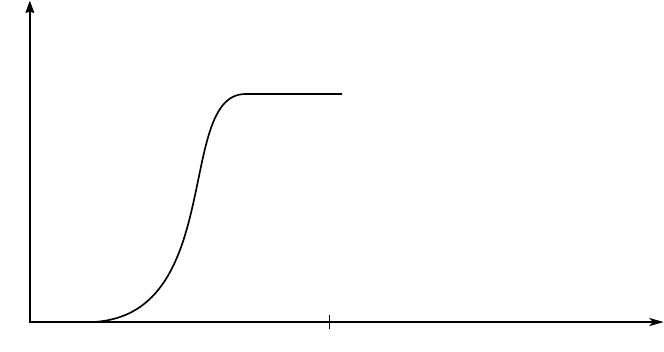
\caption{The function $u$}
\label{fig:function_u}
\end{figure}

We claim that $\widetilde{X}$ realises the push-off $B^+$ as an abstract \subopenbook \ of an abstract open book description of $M$.
Observe that, as $X'$ is tangent to the pages of $(B, \pi)$, the condition
$\pi^* d\theta (\widetilde{X}) = 1$ is satisfied and that $\widetilde{X}$ and
$X$ coincide near the binding $B$. Thus, the vector field $\widetilde{X}$ does
indeed yield an abstract open book description of $M$.
Furthermore, the vector field $\widetilde{X}$ is tangent to the push-off $B^+$,
which means that it realises $B^+$ as an abstract \subopenbook \ of the abstract
ambient open book.
The monodromy is given by the time-$2\pi$ flow of $\widetilde{X}$.
However, we want to give a description that better encodes the change of the
monodromy $\phi$ of the ambient open book we started with in terms of the
monodromy of the binding.

We denote the flow of $X'$ on $B$ by $\phi'_t$ and use it
to define diffeomorphisms $\submonodromy_t$ of a neighbourhood
$B \times D^2$ of $B$ in $M$:
$$
\submonodromy_t (b, r, \theta) = (\phi'_t (b), r, \theta).
$$

\begin{definition}
\label{def:psi}
Define a diffeomorphism $\submonodromy\colon\thinspace M\rightarrow M$ via
$\submonodromy := \submonodromy_{ 2\pi u(r) }$ and refer to it as \defterm{Chinese burn}
along $B$.
By abuse of notation, its restriction to a single page $\Sigma_\theta$ is also
denoted by $\submonodromy$.
\end{definition}

\begin{figure}[htb]
\includegraphics[width=1.\textwidth]{./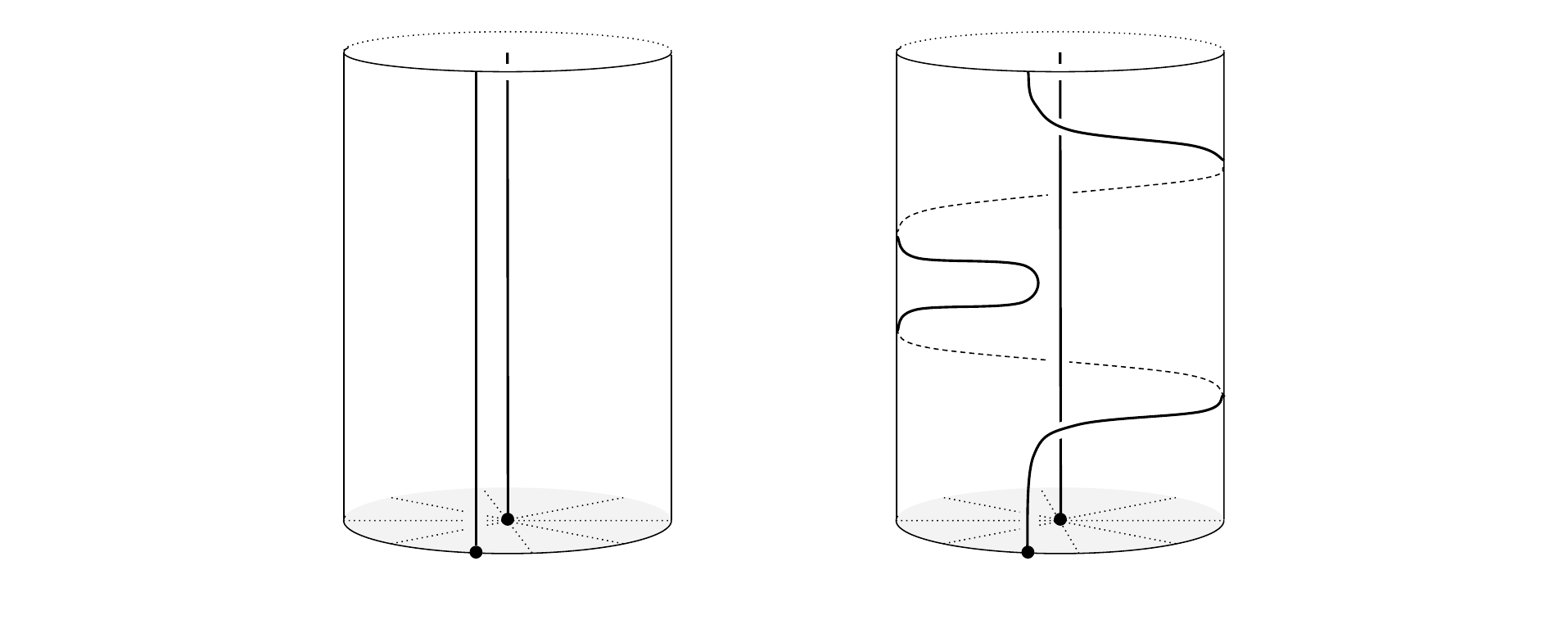}
\put(-187, 70){$\stackrel{\submonodromy}{\longrightarrow}$}
\caption{A Chinese burn along a boundary component.}
\label{fig:submonodromy}
\end{figure}

Observe that the monodromy of the abstract open book obtained from the vector
field $\widetilde{X}$ is $\phi \circ \submonodromy$, i.e.\ the push-off $B^+$ yields an
an abstract \subopenbook \ of $(\Sigma, \phi \circ \submonodromy)$. We thus proved the following statement.

\begin{lemma}
\label{lemma:abstract_sub}
The push-off $B^+$ induces an abstract \subopenbook \ of $(\Sigma, \phi \circ \submonodromy)$
with page diffeomorphic to $\Sigma'$
(more concretely, the page is $g|_{\Sigma'_0}(\Sigma'_0) \cong \Sigma'$),
where $\submonodromy$ is a Chinese burn along $B$.
\end{lemma}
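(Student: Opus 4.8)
The plan is to show that the single vector field $\widetilde{X} = X + u(r)X'$ simultaneously accomplishes three things: it recovers $M$ as an abstract open book, it is tangent to the push-off $B^+$, and its time-$2\pi$ flow equals $\phi\circ\submonodromy$. The first point is the easiest and I would dispatch it immediately. Since $X'$ is tangent to the pages of $(B,\pi)$ we have $\pi^*d\theta(X')=0$, so $\pi^*d\theta(\widetilde{X})=\pi^*d\theta(X)=1$; and since $u\equiv 0$ near $B$, the fields $\widetilde{X}$ and $X$ agree there, forcing the time-$2\pi$ map to be the identity near $\partial\Sigma = B$. Hence $\widetilde{X}$ is an admissible open-book vector field and the associated abstract open book recovers $M$.

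The geometric heart is tangency of $\widetilde{X}$ to $B^+$, which I would verify by showing that its flow $F_t$ preserves $B^+ = g(B)$. In the coordinates $(b',r',\theta',r,\theta)$ one computes $F_t(b,r,\theta) = (\phi'_{t\,u(r)}(b),\, r,\, \theta+t)$, using that $u(r)$ is constant along the flow of $X'$ (which preserves $r$) and that, the construction being local, we may take $\phi = \id$ near $B$ so that $X = \partial_\theta$ there. Applying $F_t$ to $g(b)$ and demanding the result again lie in $B^+$ splits into two regions. Where $f(r') = 0$ the $B$-coordinate of $g(b)$ sits on the \subbinding \ $B'$, which is fixed pointwise by $\phi'$ (as $X'$ vanishes on $B'$), so invariance is automatic. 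Where $f(r')\neq 0$ one has $h(r') = c$ by construction of the push-off, and invariance reduces to the single normalisation $u(c)=1$ chosen in Figure~\ref{fig:function_u}. I expect this interplay between the cut-off $u$ and the profile functions $f,h$ to be the main obstacle: one must check that $h\equiv c$ on the support of $f$ and that $u\equiv 1$ for $r\geq c$, so that the Chinese burn matches the push-off exactly on the part of $B^+$ that has left the \subbinding .

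With tangency established, $B^+$ is an abstract \subopenbook , and it remains to identify its monodromy. Again using the local reduction $\phi=\id$ near $B$, the summands $X = \partial_\theta$ and $u(r)X'$ commute, since $u(r)X'$ is independent of $\theta$; hence the time-$2\pi$ flow factors as the composition of the time-$2\pi$ flows of the two summands. The first is the ambient monodromy $\phi$, while the second sends $(b,r,\theta)$ to $(\phi'_{2\pi u(r)}(b), r, \theta)$, which is precisely the Chinese burn $\submonodromy = \submonodromy_{2\pi u(r)}$ of Definition~\ref{def:psi}. As $\submonodromy$ is supported near $B$, where $\phi=\id$, the global return map is unambiguously $\phi\circ\submonodromy$.

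Finally I would read off the page directly from $g$: every point of $B^+$ has ambient angle $\theta$ equal to the \subbinding \ angle $\theta'$, so the page at angle $0$ is the $\theta'=0$ part of $B^+$, namely $g(\Sigma'_0) = g|_{\Sigma'_0}(\Sigma'_0)$. Since $g$ is an embedding this is diffeomorphic to $\Sigma'$, which completes the argument.
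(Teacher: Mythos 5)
Your proposal is correct and follows essentially the same route as the paper: both arguments hinge on the single vector field $\widetilde{X} = X + u(r)X'$, checking that it is an admissible open-book vector field agreeing with $X$ near $B$, that it is tangent to $B^+$, and that its time-$2\pi$ flow factors as $\phi\circ\submonodromy$. The paper merely asserts the tangency and the factorisation of the flow where you carry out the coordinate computation; the one small imprecision is that $u$ must return to $0$ outside the tubular neighbourhood $B\times D^2$ so that $\widetilde{X}$ is globally defined (only $u(c)=1$ is needed for tangency, not $u\equiv 1$ for all $r\geq c$), which does not affect the argument.
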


We constructed the push-off $B^+$ inside the manifold $M(\Sigma, \phi)$
and equipped it with a natural framing $F_1$ corresponding to the page framing.
In particular, the push-off is a \emph{framed} \subopenbook ,
i.e.\ a \subopenbook \ with a specified framing.
The previous lemma shows that the push-off also defines an \emph{abstract} \subopenbook \ of
$(\Sigma, \phi \circ \submonodromy)$.
However, the framing $F_1$ does \emph{a priori} not
give a framing in the abstract setting since it is not invariant under the monodromy.
We call an abstract \subopenbook \ with a framing which is invariant under the monodromy
a \defterm{framed abstract \subopenbook }.

\begin{remark}
\label{rem:framedsob}
Given two diffeomorphic framed \subopenbook s with pages $\Sigma'_0,\Sigma'_1  \subset (\Sigma, \phi)$ and 
isomorphic normal bundles,
it is easy to obtain an open book structure of their fibre connected sum.
The new page is
$$\widetilde{\Sigma} := \big( \Sigma \setminus (\nu\Sigma'_0 \cup \nu\Sigma'_1)\big) / \sim$$
with the identification induced by the given framings,
and the old monodromy $\phi$ restricts to the new monodromy.
\end{remark}

A natural framing of the push-off in the abstract setting is the following:
We define the \defterm{constant framing $F_2$} as
$$
F_2 \colon\thinspace 
\widetilde{u} \partial_r - (1-\widetilde{u}) \partial_{x'},
$$
where $\widetilde{u}$ is the restriction of the function
$\widetilde{u} \colon\thinspace M \rightarrow \R$
defined in Section~\ref{section:framings} to a page $\Sigma$.

To realise the push-off as a framed abstract \subopenbook \ with framing $F_2$,
we have to alter the monodromy of the ambient abstract open book. We will change
the monodromy by a certain diffeomorphism of the page fixing the push-off,
the so-called \emph{twist map}.
Let $\sigma \colon\thinspace \R \rightarrow \R$ be a cut-off function with
$\sigma(0) = 0$ and $\sigma(\epsilon) = 1$ and
$$\tau^{\sigma(r)}_\pm \colon\thinspace D^2 \rightarrow D^2$$
a smoothened Dehn twist of the disc.
That is, a diffeomorphism with the qualitative behaviour of
$$
(s, \theta) \mapsto (s, \theta \pm 2\pi \sigma(r) (1-s) ),
$$ smoothened near the boundary and the origin,
such that the origin is an isolated fixed point
and a neighbourhood of the boundary is fixed. This can be achieved by
constructing it as the flow of an appropriate vector field.
Recall that, by construction, the intersection of the push-off of the
binding $B$ with a single page $\Sigma$ of the ambient open book
is a copy of a page $\Sigma'$ of the open book of the binding.
In particular, we can identify a tubular neighbourhood of
$B^+ \cap \Sigma_0$ in $\Sigma_0$
with $\Sigma' \times D^2 \subset \Sigma$.
Observe that the $r$-coordinate can be regarded as a collar parameter
on $\Sigma'$.
We define a diffeomorphism of a neighbourhood of the collar by
\begin{eqnarray*}
\partial\Sigma' \times [0,\epsilon] \times D^2 & \rightarrow &
\partial\Sigma' \times [0,\epsilon] \times D^2 \\
(b', r, p) & \mapsto & (b', r, \tau^{\sigma(r)}_-(p) ). \\
\end{eqnarray*}
We can extend this to a diffeomorphism 
$\Sigma' \times D^2 \rightarrow \Sigma' \times D^2$
of the whole tubular neighbourhood of
$\Sigma'$ via $\id_{\Sigma'} \times \tau^1_-$.
Furthermore, this map can be extended to a self-diffeomorphism
$\twistmap \colon\thinspace \Sigma \rightarrow \Sigma$
of the page $\Sigma$ via the identity.

\begin{definition}
\label{def:twist_map}
The diffeomorphism $\twistmap \colon\thinspace \Sigma \rightarrow \Sigma$ is
called \defterm{twist map}.
\end{definition}

The twist map $\twistmap$ is isotopic to the identity, so we have
$M_{(\Sigma, \phi \circ \submonodromy)} \cong
M_{(\Sigma, \phi \circ \submonodromy \circ \twistmap)}$.

\begin{lemma}
\label{lem:abstract_framed_sub}
The push-off $B^+$ with its induced framing $F_1$ corresponds to the framed abstract \subopenbook \ of $(\Sigma, \phi \circ \submonodromy \circ \twistmap)$ with page $g|_{\Sigma'_0}(\Sigma'_0) \cong \Sigma'$ framed by the natural framing $F_2$.
\end{lemma}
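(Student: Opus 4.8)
The plan is to reduce the statement to a comparison of the two framings $F_1$ and $F_2$ and to show that the twist map $\twistmap$ accounts for exactly their difference. By Lemma~\ref{lemma:abstract_sub} the push-off $B^+$ is an abstract \subopenbook\ of $(\Sigma, \phi \circ \submonodromy)$ with \subpage\ $g|_{\Sigma'_0}(\Sigma'_0) \cong \Sigma'$. Since $\twistmap$ is isotopic to the identity and, on the tubular neighbourhood of $B^+ \cap \Sigma_0$ identified with $\Sigma' \times D^2$, restricts to $\id_{\Sigma'} \times \tau^1_-$, it fixes the \subpage\ $\Sigma'_0$ pointwise (this is the zero-section, corresponding to the isolated fixed point $0 \in D^2$ of $\tau^1_-$). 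Hence $B^+$ is equally an abstract \subopenbook\ of $(\Sigma, \phi \circ \submonodromy \circ \twistmap)$ with the same \subpage, and it remains only to match the framing.

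First I would record that on the zero-angle page the two framings coincide. Substituting $\theta = 0$ into the polar expression for $F_1$ obtained in the proof of Lemma~\ref{lemma:push-off_isotopic_to_binding} yields $-(1-\widetilde{u})\partial_{x'} + \widetilde{u}\,\partial_r$, which is precisely $F_2$. Thus $F_2$ is the restriction $F_1|_{\Sigma_0}$ of the concrete framing to the page $\Sigma_0$, and it is the natural candidate for the abstract framing.

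The core of the argument is to understand how $F_1$ varies from page to page, i.e.\ as the angle $\theta$ increases. Reading off $F_1$ in the normal plane spanned by $\partial_r$ and $\partial_{x'}$ --- for example in the region where $\widetilde{u} \equiv 1$, where $F_1 = \cos\theta\,\partial_r - \sin^2\theta\,\partial_{x'}$ --- one checks that the normal vector never vanishes and that it has winding number one around the normal circle as $\theta$ runs from $0$ to $2\pi$; consequently it is homotopic, through non-vanishing normal fields, to a single uniform rotation of $F_2$. This one full turn is exactly the rotation that the full Dehn twist $\tau^1_-$ underlying $\twistmap$ feeds into the monodromy. Composing with $\twistmap$ therefore has two effects at once: the constant framing $F_2$ becomes invariant under $\phi \circ \submonodromy \circ \twistmap$ (a full turn closes up), so that $(B^+, F_2)$ is a genuine framed abstract \subopenbook; and dragging $F_2$ along the flow realising $\phi \circ \submonodromy \circ \twistmap$ now picks up precisely the missing turn, reproducing the concrete framing $F_1$ up to homotopy through framings. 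This identifies $(B^+, F_1)$ with the framed abstract \subopenbook\ of $(\Sigma, \phi \circ \submonodromy \circ \twistmap)$ framed by $F_2$, as claimed.

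The main obstacle is the bookkeeping in the last step: one has to match both the sense and the magnitude of the twist --- a single full turn in the direction prescribed by $\tau^1_-$ --- to the winding of $F_1$, while keeping track of the tilting of the normal framing into the $B$-directions produced by the Chinese burn $\submonodromy$ and of the radial profiles of the cut-off functions $\sigma$, $u$ and $\widetilde{u}$. As in Lemma~\ref{lemma:push-off_isotopic_to_binding}, once the geometric picture is in place this is a direct, if slightly tedious, verification in the coordinates $(b', r', \theta', r, \theta)$; care is needed only to ensure that the interpolating homotopy remains a framing after the corners are smoothened.
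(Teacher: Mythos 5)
Your overall strategy is the same as the paper's: reduce everything to a comparison of the two framings and show that the twist map $\twistmap$ accounts for exactly one full relative turn of $F_1$ against $F_2$ as $\theta$ runs around $S^1$. The observation that $F_1|_{\theta=0}=F_2$ is correct and is implicitly the starting point of the paper's interpolation as well. However, the one place where you try to be concrete is set up incorrectly, and as stated that step fails. You propose to read off the winding of $F_1$ ``in the normal plane spanned by $\partial_r$ and $\partial_{x'}$'' from the coefficient vector $(\cos\theta,\,-\sin^2\theta)$. In that plane this vector has second coordinate $-\sin^2\theta\le 0$ for all $\theta$, so it stays in a closed half-plane and its winding number is \emph{zero}, not one. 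The winding of one is real, but it only appears after you project to the genuine normal bundle of $B^+$: in the region $\widetilde{u}\equiv 1$ the vector $\partial_{x'}=\cos\theta'\,\partial_{r'}-\tfrac{1}{r'}\sin\theta'\,\partial_{\theta'}$ becomes \emph{tangent} to $B^+$ exactly where $\sin\theta=0$ (its $\partial_{r'}$-component lies in $T\Sigma'_\theta$), and its normal component is $-\tfrac{1}{r'}\sin\theta\,\partial_{\theta'}$, which changes sign with $\sin\theta$. The projected framing is then $\cos\theta\,\partial_r+\tfrac{1}{r'}\sin^3\theta\,\partial_{\theta'}$, and \emph{this} winds once. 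So your numerical conclusion is right, but the computation you describe does not establish it; the tangency of $\partial_{x'}$ at $\sin\theta=0$ is precisely the subtlety, and it is also why the paper flags $\theta=\tfrac{\pi}{2},\tfrac{3\pi}{2}$ as the delicate angles in its interpolation.

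The second gap is that you never actually move $F_1$ into the abstract model where $F_2$ lives. The lemma compares a framing defined on $B^+\subset M_{(\Sigma,\phi)}$ with a framing defined on the abstract \subopenbook\ of $(\Sigma,\phi\circ\submonodromy\circ\twistmap)$; the paper makes this comparison by transporting $F_1$ through the explicit diffeomorphism $\Theta(p)=\submonodromy_{-\pi(p)u(\mathbf{r}(p))}(p)$ onto $M_{(\Sigma,\phi\circ\submonodromy)}$, which rotates the $B'$-directions by an angle $-\theta'u(\cdot)$ and produces the coefficients $\lambda_1,\lambda_2$ that must then be matched against $\twistmap(F_2)$. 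You acknowledge this (``the tilting of the normal framing into the $B$-directions produced by the Chinese burn'') but relegate it to ``bookkeeping''. Since the entire content of the lemma is that this bookkeeping closes up — that the burn's rotation plus the Dehn twist $\tau^1_-$ together reproduce $F_1$ through a homotopy that stays nowhere tangent — deferring it leaves the statement unproved. To complete your argument you would need to write down the transported $F_1$ and $\twistmap(F_2)$ in the coordinates $(b',r',\theta',r,\theta)$, verify they agree up to a homotopy through normal (not merely non-vanishing) vector fields, and fix the sign and profile of $\tau^{\sigma(r)}_-$ accordingly, which is exactly what the paper's proof does.
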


\begin{proof}
Again, this is a straightforward calculation, see \cite{KoelschDurst} for details.
\end{proof}

\section{The main result}
\label{section:theorem}

Combining the results of the previous sections, in particular Lemma~\ref{lemma:push-off_isotopic_to_binding}, which states that the binding is isotopic to its push-off, and Lemma~\ref{lemma:sum_sub_ob}, which yields a natural open book structure on the fibre connected sum along push-offs, 
we are now able to give an explicit open book decomposition for the binding
sum operation and thus
state our main result, or, more concretely, the following theorem.

\begin{theorem}
\label{thm:main1}
Let $M$ be a (not necessarily connected) smooth manifold with open book decomposition
$(\Sigma, \phi)$ whose binding $B$ contains two diffeomorphic components $B_0,B_1$
with diffeomorphic open book decompositions $(\Sigma^\prime, \phi^\prime)$.
Then the fibre connected sum of $M$ performed along $B_0$ and $B_1$
with respect to the page framings admits an open book decomposition naturally adapted to the construction.
The new page is the fibre connected sum of the page $\Sigma$ along the nested pages
$\Sigma'_i$ induced by the push-offs of the bindings components $B_i$.
The new binding is 
given by a fibre connected sum of $B_0$ and $B_1$ along their respective bindings $B^\prime = \partial \Sigma$.
The monodromy remains unchanged outside of a neighbourhood of the original
binding components $B_i$, and over the remaining part it 
restricts to $\submonodromy \circ \twistmap$, where $\submonodromy$ is a
\emph{Chinese burn} along $B_i$ (see Definition~\ref{def:psi}) and $\twistmap$ the \emph{twist map}
(see Definition~\ref{def:twist_map}).\qed
\end{theorem}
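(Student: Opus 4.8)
The plan is to assemble the theorem from the machinery already developed, by replacing the naive fibre sum along the bindings with a fibre sum along their push-offs. First I would observe that, by Lemma~\ref{lemma:push-off_isotopic_to_binding}, the framed submanifold $(B^+, F_1)$ is isotopic to $(B_i, F_0)$, where $F_0$ is the page framing. Since performing a fibre connected sum with respect to isotopic framings yields diffeomorphic results, the fibre sum of $M$ along $B_1$ and $B_2$ with respect to their page framings agrees with the fibre sum along the push-offs $B_1^+$ and $B_2^+$ with respect to $F_1$. This reduces the theorem to understanding the fibre sum along the two push-offs, each of which is a nested open book by construction.

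Next I would invoke Lemma~\ref{lem:abstract_framed_sub} to realise each push-off $B_i^+$ with framing $F_1$ as the \emph{framed abstract nested open book} of $(\Sigma, \phi \circ \submonodromy \circ \twistmap)$ with page $\Sigma'$ framed by the constant framing $F_2$. The key point is that after this change of monodromy the two push-offs carry framings that are invariant under the (new) monodromy, so they are honestly abstract framed nested open books with isomorphic normal bundles. Remark~\ref{rem:framedsob} then applies directly: the fibre connected sum of two diffeomorphic framed abstract nested open books is obtained by excising tubular neighbourhoods $\nu\Sigma_0'$ and $\nu\Sigma_1'$ of the two nested pages from $\Sigma$, gluing along the boundary via the identification induced by $F_2$, and leaving the monodromy $\phi \circ \submonodromy \circ \twistmap$ to restrict to the quotient. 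This gives at once the global open book structure, the statement that the monodromy is unchanged away from the excised region and restricts to $\submonodromy \circ \twistmap$ over it, and the description of the new binding as the fibre sum of $B_1$ and $B_2$ along their bindings $B' = \partial\Sigma'$ (which is exactly the restriction of the whole construction to the binding, as recorded in Lemma~\ref{lemma:sum_sub_ob}).

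It then remains to identify the new page, namely the space obtained by removing $\nu\Sigma_0' \cup \nu\Sigma_1'$ from $\Sigma$ and regluing, with the result of two consecutive generalised $1$-handle attachments of the type $H_{\Sigma'} = D^1 \times (\Sigma' \times D^1)$. Here I would use the explicit local model from the push-off construction: each nested page $\Sigma_i' \cong \Sigma'$ sits in $\Sigma$ with a tubular neighbourhood $\Sigma' \times D^2$, and the fibre-sum identification glues the two copies of $\Sigma' \times \partial D^2$ across the excised region. Cutting the two solid-torus-like neighbourhoods $\Sigma' \times D^2$ open and tracking the gluing, one sees the connecting region as $D^1 \times (\Sigma' \times D^1)$, which is precisely the generalised handle $H_{\Sigma'}$; doing this for each of the two arcs of $\partial D^2$ along which the identification is performed accounts for the two handles, and the fact that the attaching depends on $\Sigma'$ is built into the handle's shape.

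I expect the main obstacle to be the last step: making the identification of the excised-and-reglued region with two generalised $1$-handle attachments fully rigorous, including specifying the attaching maps on $\partial_- H_{\Sigma'} = S^0 \times (\Sigma' \times D^1)$ and verifying that the corner-smoothing is compatible with the monodromy $\submonodromy \circ \twistmap$ supported there. This is essentially a careful bookkeeping of the local coordinates $(b', x', y', x, y)$ near the binding together with the framings $F_1, F_2$, and it is naturally deferred to the handle discussion of Section~\ref{section:handles}; the genuinely new content of the theorem — that an open book exists and that its page, binding, and monodromy are the claimed ones — follows formally from Lemma~\ref{lemma:push-off_isotopic_to_binding}, Lemma~\ref{lem:abstract_framed_sub}, and Remark~\ref{rem:framedsob}.
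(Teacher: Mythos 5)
Your proposal is correct and follows essentially the same route as the paper: replace the binding sum by the fibre sum along the push-offs via Lemma~\ref{lemma:push-off_isotopic_to_binding}, realise the push-offs as framed abstract nested open books of $(\Sigma, \phi \circ \submonodromy \circ \twistmap)$ via Lemma~\ref{lem:abstract_framed_sub}, and apply Remark~\ref{rem:framedsob} to read off page, binding and monodromy. The handle identification you defer is carried out in the paper by exhibiting the co-cores explicitly (the glued isotopy traces $\Sigma'\times[-1,1]$ for $H^{(2)}_{\Sigma'}$, and after cutting along it the region $\Sigma'\times[0,2\pi]$ for $H^{(1)}_{\Sigma'}$), which is the dual of the arc decomposition you sketch.
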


It is worth noting that the
new page can be obtained from the old one by two consecutive generalised
$1$-handle attachments of type $\Sigma'$, where
a generalized $1$-handle in the sense of \cite{Klukasa} is of the form $H_{\Sigma'} = D^1 \times (\Sigma' \times D^1)$.
In case $\Sigma$ can be endowed with an appropriate exact symplectic form,
generalised $1$-handles can be adapted to the contact setting, and naturally extends the symplectic handle
constructions due to Eliashberg \cite{Eliashberg90} and Weinstein \cite{Weinstein91}.

%
\section{Application to contact topology}
\label{section:contact}
%

In this section we want to show that the binding sum of contact open books yields
an open book and a contact structure that again fit nicely together.

For an introduction to open books in contact topology we kindly refer the reader to
\cite{Etnyre2004,Geiges2008,Koert2010}. We will always assume contact
structures to be coorientable.

A positive contact structure $\xi$ on an oriented manifold $M$ is \defterm{supported}
by an open book structure $(B, \pi)$ if it can be written as the kernel of a
contact form $\alpha$ inducing a positive contact structure on $B$ and such that
$d\alpha$ induces a positive symplectic structure on the fibres of $\pi$.
Such a contact form $\alpha$ is then called \defterm{adapted} to the open book.
Note that if the binding $B$ of an open book $(B, \pi)$ is already assumed to be
a contact manifold, then a contact form is adapted to the open book if and only if
its Reeb vector field is positively transverse to the fibres of $\pi$
(cf.~\cite[Lemma~2.13]{Koert2010}).

\begin{theorem}
\label{thm:contact}
The binding sum construction can be made compatible with the underlying
contact structures,
i.e.\ the contact structure obtained by the contact fibre connected sum along
contactomorphic binding components is supported by the natural open book structure
resulting from the sum along the push-offs of the respective binding components.
\end{theorem}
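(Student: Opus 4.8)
The plan is to combine the topological result of Theorem~\ref{thm:main1} with the contact fibre connected sum and then verify the compatibility condition page by page. Recall that an open book $(B,\pi)$ is \emph{compatible} with a contact structure $\xi = \ker\alpha$ if $\alpha$ restricts to a contact form on each page and $d\alpha$ restricts to a symplectic form there, with $\alpha > 0$ on the binding orientation. The contact fibre connected sum along codimension-two contact submanifolds is well-defined precisely when the binding components are contactomorphic and the normal bundles are compatibly trivialised (cf.\ Theorem~7.4.3 in \cite{Geiges2008}); so the hypotheses of Theorem~\ref{thm:contact} exactly match those of Theorem~\ref{thm:main1} with the extra contact data attached. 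First I would fix a contact form $\alpha$ adapted to the original open book $(\Sigma, \phi)$ and arrange, by Giroux's normal-form theorem near the binding, standard coordinates $(b, r, \theta)$ in which $\alpha$ has the model form $\alpha = \alpha_B + r^2 d\theta$ (up to scaling), where $\alpha_B$ is a contact form on $B$ adapted to its own open book $(\Sigma',\phi')$.

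The key observation is that \emph{all} the modifications performed in Sections~\ref{section:push-off} and~\ref{section:theorem} are supported in an arbitrarily small neighbourhood of the binding and are, up to isotopy, determined by the binding monodromy. So the strategy is to show that each ingredient can be realised by a \emph{contact} isotopy or a \emph{Weinstein}-type handle attachment rather than a merely smooth one. Concretely I would proceed in three steps. Step one: show the push-off $B^+$ can be taken to be a contact submanifold, i.e.\ that the embedding $g$ of Definition~\ref{def:push_off} can be $C^0$-approximated by a contact embedding whose image inherits the contact structure of the ambient page fibrewise; this uses that $X'$ (the extended binding monodromy vector field) may be chosen to be a contact vector field for $\alpha_B$, so that $\widetilde{X} = X + u(r)X'$ remains compatible with $d\alpha$ on the pages. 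Step two: verify that the contact fibre sum along the push-offs, performed with the framing $F_1 \simeq F_2$, agrees with the contact fibre sum along the $B_i$ demanded in the statement — this is where Lemma~\ref{lemma:push-off_isotopic_to_binding} does the work, since an isotopy of framed \emph{contact} submanifolds induces a contactomorphism of the fibre sums. Step three: invoke the handle interpretation of Section~\ref{section:handles} and check that the two generalised $1$-handles $H^{(1)}_{\Sigma'}$ and $H^{(2)}_{\Sigma'}$ are \emph{Weinstein} handles in the sense indicated at the end of Section~\ref{section:one_handles}, so that the new page carries a compatible exact symplectic structure and the new monodromy $\phi\circ\submonodromy\circ\twistmap$ is an exact symplectomorphism of it.

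The heart of the argument is that compatibility is a fibrewise, essentially local condition, so it suffices to control the contact geometry in the collar $B\times D^2$ where every modification lives. I would phrase this as: the new open book $(\widetilde{B}, \Pi)$ is compatible with $\#_\Psi\xi$ if and only if there is a $1$-form on the new total space restricting to a Liouville form on each new page $\widetilde{\Sigma} = \#_{\Psi_0}\Sigma$ and to a positive contact form on $\widetilde{B}$. Since away from the handles nothing changes and $\alpha$ already witnesses compatibility there, the entire check reduces to the model region, where one writes down the interpolated form explicitly and confirms the contact and symplectic nondegeneracy conditions survive the cut-and-paste of the fibre sum. The Chinese burn $\submonodromy$ and the twist map $\twistmap$ are each supported in this region and are generated by flows of the (contact, respectively Hamiltonian) vector fields built from $X'$, so they preserve the relevant forms up to exact terms.

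I expect the main obstacle to be \textbf{Step one combined with the model computation}: ensuring that the push-off, which by construction intersects the binding and is only ``close to'' a genuine push-off (see the remark following Definition~\ref{def:push_off}), can simultaneously be made a contact submanifold \emph{and} carry the framing $F_1$ in a way compatible with the ambient contact form. The twisting encoded in $\{\Psi_t\}$ (the a~priori nontrivial loop underlying the twist map in Lemma~\ref{lem:sum_abstract_sub}) must be absorbed by a genuine contact isotopy, and one must verify that the smoothing of corners in the handle attachments — harmless topologically — does not destroy positivity of the contact form or nondegeneracy of $d\alpha$ on the pages. The cleanest route around this is likely to appeal to the uniqueness of compatible contact structures up to isotopy (Giroux--Mohsen), reducing the problem to checking that the \emph{smooth} open book of Theorem~\ref{thm:main1} supports \emph{some} contact structure isotopic to $\#_\Psi\xi$, rather than exhibiting the contact form by hand throughout.
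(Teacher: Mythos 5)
Your overall architecture (contact push-off, locality near the binding, fibre sum along the push-offs) matches the paper's, and your Step one corresponds to Proposition~\ref{proposition:contact_push-off}, which the paper proves by a direct Lutz-pair computation of $g^*\alpha\wedge(dg^*\alpha)^{n-1}$ rather than by approximation. But there are two genuine gaps. First, to make the push-off an \emph{abstract} nested open book in the symplectic category you need a monodromy vector field tangent to $B^+$ whose flow is symplectic on the pages; the naive isotopy generated by $X'$ (the extended binding monodromy) is \emph{not} symplectic on the pages in general -- the paper points this out explicitly in the $S^4\times S^1$ example -- and this is why Proposition~\ref{proposition:tangent_monodromy} has to invoke Auroux's version of the symplectic isotopy extension theorem. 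Your claim that $\widetilde{X}=X+u(r)X'$ ``remains compatible with $d\alpha$ on the pages'' skips exactly this point. Relatedly, you never establish that the fibre sum of the pages is again an exact symplectic manifold with outward-pointing Liouville field (the paper needs Lemma~\ref{lemma:exact} and a normal-form argument for the Liouville primitives to get this), which your Weinstein-handle Step three would also require but does not supply.

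Second, and more seriously, your proposed escape route -- appealing to uniqueness of supported contact structures to reduce to showing that the smooth open book of Theorem~\ref{thm:main1} supports \emph{some} contact structure isotopic to $\#_\Psi\xi$ -- does not close the argument. Uniqueness statements of Giroux type compare two contact structures \emph{both of which are supported by the given open book}; to apply one here you would first have to know that $\#_\Psi\xi$ is supported by the new open book, which is the assertion of the theorem. Knowing only that $\#_\Psi\xi$ is isotopic to a supported contact structure is strictly weaker than the stated claim (the open book would have to be dragged along the isotopy and would no longer be the natural one). The paper instead resolves this ``naturality'' issue head-on: it arranges, via Lemma~\ref{lemma:tangentLiouville}, an exactness normalisation of the monodromy ($\phi|_{\Sigma'\times D^2}=\phi'\times\id_{D^2}$ with $\phi'$ exact), and Proposition~\ref{proposition:normals_in_page}, that the fibres of the \emph{conformal symplectic normal bundle} of $B^+$ are tangent to the pages, so that the contact fibre connected sum operation literally respects the open book fibration and the resulting Reeb field stays transverse to the pages. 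Some substitute for this step is indispensable; without it your proof establishes existence of an adapted contact structure on the binding sum but not that the contact fibre connected sum produces it.
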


We proceed by introducing some further terminology needed in the remainder of the section.
Let $(M,\xi = \ker \alpha)$ be a contact manifold supported by an open book $(B, \pi)$ and denote the pull-back of $d\theta$ under $\pi\colon\thinspace M\setminus B\rightarrow S^1$ also by $d\theta$.
A vector field $X$ is called \defterm{monodromy vector field} if
\begin{itemize}
\item it is transverse to the pages and satisfies $d\theta(X) = 1$ on $M \setminus B$,
\item the restriction of $\mathcal{L}_Xd\alpha$ to any page vanishes,
\item it equals $\partial_\theta$ on a neighbourhood $B \times D^2 \subset M$ of the binding, where $(r,\theta)$ are polar coordinates on the $D^2$-factor, and the open book fibration is given by the angular coordinate on $D^2$.
\end{itemize}

Given a monodromy field we get an associated abstract contact open book description of $(M,\xi)$,
i.e.\ a triple $(\Sigma, \phi, d\lambda)$ consisting of an exact symplectic page $(\Sigma, d\lambda)$ and a symplectomorphism $\phi$ such that the Liouville vector field $Y$ defined by $i_Y\omega = \lambda$ is
transverse to the boundary $\partial\Sigma$ pointing outwards (and thus induces a contact structure on $\partial\Sigma$),
and in turn an identification of $(M,\xi)$ as a generalised Thurston--Winkelnkemper construction (cf.\ \cite[Section~7.3]{Geiges2008}).
In particular, such a vector field always exists if the open book and contact structure comes from a generalized Thurston--Winkelnkemper construction.
Since any contact manifold supported by an open book can be realised by a generalised Thurston--Winkelnkemper construction (e.g.\ see \cite[Section~7.3]{Geiges2008} or \cite[Theorem~3.1.22]{Max_Thesis}), we can always assume the existence of a compatible monodromy vector field.
An adapted Reeb vector field is not a monodromy vector field as it does not fix the binding point-wise.

Define a \defterm{Lutz pair} $h_1, h_2$ as a pair of smooth functions
$h_1 \colon\thinspace [0,1]\rightarrow \R^+$ and
$h_2 \colon\thinspace [0,1]\rightarrow \R^+_0$ such that
\begin{itemize}
\item $h_1(0) = 1$ and $h_2$ vanishes like $r^2$ at $r = 0$,
\item $h_1'(r) < 0$ and $h_2'(r) \geq 0$ for $r>0$,
\item all derivates of $h_1$ vanish at $r = 0$.
\end{itemize}

We will now define nested open books in the contact world.

\begin{definition}
\label{def:ContactNested}
Let $(\Sigma, d\lambda, \phi)$ be a contact open book and $\Sigma'\subset\Sigma$
a symplectic submanifold with boundary $\partial\Sigma'\subset\partial\Sigma$
such that in a collar neighbourhood $\partial\Sigma\times (-\varepsilon, 0]$ of 
$\partial\Sigma$ given by an outward-pointing Liouville vector field
we have
$\Sigma'\cap\big(\partial\Sigma\times (-\varepsilon, 0]\big)
 = \partial\Sigma'\times (-\varepsilon, 0]$.
Suppose furthermore that $\phi(\Sigma') = \Sigma'$, i.e.\ the monodromy leaves
$\Sigma'$ invariant (not necessarily pointwise). Then $\Sigma'$ is a
\defterm{contact abstract nested open book} of the contact open book
$(\Sigma, d\lambda, \phi)$.
\end{definition}

Note that the fibre connected sum construction can be adapted to the contact
setting as follows (we will only apply it in the case of trivial normal bundles where the existence of the bundle isomorphism $\Phi$ is always granted).

\begin{theorem}[{\cite[Theorem 7.4.3]{Geiges2008}}]
\label{thm:contact_fibre_sum}
Let $(M, \xi)$ and $(M', \xi')$ be contact manifolds of dimension 
$\dimension M' = \dimension M - 2$, where the contact structures $\xi, \xi'$
are assumed to be cooriented; these cooriented contact structures induce orientations of $M$ and $M'$. 
Let $j_0, j_1\colon\thinspace (M', \xi')\rightarrow (M, \xi)$ be disjoint contact embeddings that respect the coorientations, and such that there exists a fibre-orientation-reversing bundle isomorphism $\Phi\colon\thinspace N_0\rightarrow N_1$
of the normal bundles of $j_0(M')$ and $j_1(M')$.
Then the fibre connected sum $\#_\Phi M$ admits a contact structure that coincides
with $\xi$ outside tubular neighbourhoods of the submanifolds $j_0(M')$ and $j_1(M')$.
\end{theorem}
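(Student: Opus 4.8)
The plan is to reduce the statement to an explicit, rotationally symmetric normal form near the two submanifolds and then to realise the fibre sum gluing as a matching of profile curves. Writing $\dimension M = 2n+1$, I would first invoke the contact neighbourhood theorem for contact submanifolds of codimension two (see \cite{Geiges2008}): since $j_0, j_1$ are contact embeddings respecting the coorientations, each induces the \emph{same} positive contact form $\alpha'$ on $M'$, while $\Phi$ identifies the two normal bundles. One thereby obtains contactomorphic model neighbourhoods of $j_0(M')$ and $j_1(M')$, each of the form $M'\times D^2$ (twisted by a connection $1$-form if the normal bundle is non-trivial) carrying the standard contact form $\alpha'+r^2\,d\phi$, where $(r,\phi)$ are polar coordinates on the $D^2$-factor. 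The fibre-orientation-reversing hypothesis on $\Phi$ is exactly what makes the ensuing identification orientation preserving on the total space.

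Next I would deform each model, keeping the contact structure fixed near the zero section and outside a slightly larger tubular neighbourhood, to the rotationally symmetric contact form
\[
\alpha=h_1(r)\,\alpha'+h_2(r)\,d\phi,
\]
with $h_1(0)=1$ and $h_2$ vanishing like $r^2$ at the origin, exactly as for a Lutz pair. A direct computation gives
\[
\alpha\wedge(d\alpha)^n
= n\,h_1^{\,n-1}\,(h_1 h_2'-h_2 h_1')\;\alpha'\wedge(d\alpha')^{n-1}\wedge dr\wedge d\phi,
\]
so that the contact condition is precisely $h_1 h_2'-h_2 h_1'>0$; equivalently, the profile curve $r\mapsto(h_1(r),h_2(r))\in\R^2$ must turn counterclockwise about the origin and never pass radially through it. This reformulation is the key technical device: the model contact structure is determined by the turning curve, leaving ample freedom to prescribe its endpoints.

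Then I would carry out the gluing. Under the fibre sum identification a point at radius $r$ in $N_0$ is sent to radius $\sqrt{\epsilon^2-r^2}$ in $N_1$, while $\Phi$ acts on the fibres as $\phi\mapsto-\phi$; pulling back the profile of the second model therefore replaces $(h_1,h_2)$ by $(h_1,-h_2)$, i.e.\ reflects its curve into the lower half-plane $\{h_2<0\}$. The task becomes to connect the profile curve of the first model, lying in the upper half-plane $\{h_2>0\}$, to the reflected profile of the second by a single curve on the neck $M'\times I\times S^1$ that turns counterclockwise about the origin throughout. Such an interpolation exists: starting in the upper half-plane one continues counterclockwise, allowing $h_1$ to become negative as the curve crosses the left half-plane, until it reaches the prescribed endpoint with $h_2<0$. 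I expect this matching --- arranging the interpolation so that $h_1 h_2'-h_2 h_1'>0$ holds along the entire neck while meeting the boundary data dictated by $\Phi$ and the radial inversion with the forced sign change --- to be the main obstacle, precisely because the orientation reversal compels the curve to migrate from one half-plane to the other. Once such a profile is fixed, the resulting form defines a contact structure on the neck agreeing with the two models near its ends, hence glues to a global contact structure on $\#_\Phi M$; since every deformation was supported inside the chosen tubular neighbourhoods, it coincides with $\xi$ outside them, as claimed.
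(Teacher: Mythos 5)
This statement is not proved in the paper at all: it is quoted verbatim from \cite[Theorem~7.4.3]{Geiges2008}, so your attempt has to be measured against Geiges's proof. Your reduction to the rotationally symmetric model and your computation of $\alpha\wedge(d\alpha)^n$ match that proof, but your key interpolation step contains a genuine error. From your own displayed formula the contact condition is $h_1^{\,n-1}\bigl(h_1h_2'-h_2h_1'\bigr)>0$, \emph{not} simply $h_1h_2'-h_2h_1'>0$; the factor $h_1^{\,n-1}$ is only harmless when $\dim M=3$ (i.e.\ $n=1$). In all dimensions $\geq 5$ the form $h_1\alpha'+h_2\,d\phi$ fails to be contact wherever $h_1=0$: there $d\alpha = dr\wedge(h_1'\alpha'+h_2'\,d\phi)$ has rank two, so the kernel of $\alpha$ contains all of $TM'$ and $(d\alpha)^n$ degenerates identically, and for $n$ even the sign of $h_1^{\,n-1}$ moreover reverses the required turning direction in the half-plane $\{h_1<0\}$. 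Hence your proposed route ``allowing $h_1$ to become negative as the curve crosses the left half-plane'' is impossible in precisely the higher-dimensional setting this paper is concerned with. This is the well-known obstruction to a naive higher-dimensional Lutz twist: crossing the axis $\{h_1=0\}$ requires replacing $h_1\alpha'$ by an interpolation through a \emph{negative} contact form, i.e.\ a Liouville pair as in the model $\lambda_{GT}$ of \cite{MNW13} recalled in Section~7 of this paper -- structure which a general $(M',\xi')$ does not carry.

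The near-full counterclockwise turn that forces you into the left half-plane is in fact an artefact of misplacing the boundary profiles on the neck. With neck coordinate $r\in(0,\epsilon)$ measured from the $N_0$-side, the identification $\rho=\sqrt{\epsilon^2-r^2}$, $\psi=-\phi$ pulls the second model back to
\[
\alpha' + \rho^2\,d\psi \;=\; \alpha' - (\epsilon^2-r^2)\,d\phi ,
\]
which is the condition near $r=0$ (the end adjacent to $\partial N_1$), with profile $(1,\,r^2-\epsilon^2)$ at angle slightly \emph{below} $0$; near $r=\epsilon$ the form must be $\alpha'+r^2\,d\phi$, profile $(1,r^2)$ at angle slightly \emph{above} $0$. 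Since $h_1h_2'-h_2h_1'>0$ forces the angle to increase with $r$, the required interpolation is a short counterclockwise arc across the \emph{positive} $h_1$-axis, entirely inside $\{h_1>0\}$: take $h_1\equiv 1$ and $h_2$ monotone increasing with $h_2=r^2-\epsilon^2$ near $r=0$ and $h_2=r^2$ near $r=\epsilon$. Crossing $\{h_2=0\}$ with $h_1>0$ is innocuous ($h_1^n h_2'>0$ there), so this works in all dimensions, and it is essentially what Geiges does. So the correct repair is not a cleverer left-half-plane curve but a redoing of the gluing bookkeeping; as written, your argument proves the theorem only for $\dim M = 3$.
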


The binding of a contact open book decomposition is a contact submanifold and hence
admits an open book structure itself. Furthermore, it has trivial normal bundle.
Given two contact open books with contactomorphic bindings, we can thus perform the
contact fibre connected sum along their bindings, and, topologically, also along
the push-offs $B^+_i$ of the bindings. We want to show that this topological
construction can be adapted to the contact scenario.

In fact, the push-off is a contact submanifold contact isotopic to the binding. Thus,
we can form the contact fibre connected sum along the push-off rather than along
the binding itself when performing the contact binding sum.

\begin{proposition}
\label{proposition:contact_push-off}
The push-off $B^+$ of the binding $B$ of a contact open book $(B, \pi)$ is a
contact submanifold contact isotopic to the binding.
\end{proposition}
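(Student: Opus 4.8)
The plan is to pull the contact form back under the push-off embedding $g$ of Definition~\ref{def:push_off} and verify the contact condition directly, and then to upgrade the topological isotopy of Lemma~\ref{lemma:push-off_isotopic_to_binding} to a \emph{contact} isotopy by parametrising the construction and invoking the contact isotopy extension theorem.

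First I would fix a contact model near the binding. As $(B,\pi)$ supports $\xi=\ker\alpha$ and $B$ is a contact submanifold, a neighbourhood of $B$ is contactomorphic to $B\times D^2$ on which
\[
  \alpha = h_1(r)\,\alpha_B + h_2(r)\,d\theta,
\]
where $(r,\theta)$ are polar coordinates on the $D^2$-factor, $(h_1,h_2)$ is a Lutz pair, and $\alpha_B=\alpha|_B$ is a contact form on $B$ adapted to the binding's own open book $(B',\pi')$. Writing $\theta'=\pi'$ and $2n+1=\dimension B$, adaptedness of $\alpha_B$ gives both $\alpha_B\wedge(d\alpha_B)^n>0$ and $d\theta'\wedge(d\alpha_B)^n>0$ on $B$.

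Next I would compute $g^*\alpha$. On the outer part of the push-off, where $g$ lands in $\{r=c\}$ with $\theta=\theta'$, the pullback is $g^*\alpha=h_1(c)\,\alpha_B+h_2(c)\,d\theta'$; expanding $(g^*\alpha)\wedge(dg^*\alpha)^n$ splits it into the two positive terms above multiplied by the positive constants $h_1(c)^{n+1}$ and $h_1(c)^n h_2(c)$, so the contact condition holds. On the transition collar near $B'$, where $g$ interpolates via the functions $f$ and $h$ of Figure~\ref{fig:funtions_def_push-off}, the same substitution yields a form $p(r')\,\alpha_{B'}+q(r,r')\,d\theta'$ with $p>0$, whose non-degeneracy is governed by the monotonicity of $f,h$ together with the Lutz inequalities. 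Choosing $f$ and $h$ compatibly with the Lutz data — which we are free to do in the topological construction — keeps $g^*\alpha$ contact throughout, so $B^+$ is a contact submanifold.

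Finally, for the contact isotopy I would parametrise the push-off as in the remark after Definition~\ref{def:push_off}, replacing $f,h$ by a family $f_s,h_s$ and thereby obtaining embeddings $g_s\colon\thinspace B\to M$ with $g_0=\iota_B$ and $g_1=g$. Carrying the computation above out with the parameter $s$ shows that each $g_s(B)$ is again a contact submanifold, giving a smooth isotopy through contact submanifolds; the contact isotopy extension theorem (see, e.g., \cite{Geiges2008}) then promotes this to an ambient contact isotopy $\psi_s$ of $(M,\xi)$ with $\psi_0=\id$ and $\psi_1(B)=B^+$, which is the asserted contact isotopy. The step I expect to be the main obstacle is the transition collar: there $g$ mixes the $r$- and $r'$-directions, so $g^*\alpha$ is no longer a clean combination of $\alpha_{B'}$ and $d\theta'$, and one must ensure the interpolation between the innermost and outermost pieces never destroys the contact condition, uniformly in $s$. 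The most economical remedy is to build $f$ and $h$ directly out of the Lutz functions so that the mixed terms retain the correct sign; once this normalisation is in place, both the contact-submanifold assertion and the parametrised version needed for the isotopy follow from a single sign estimate.
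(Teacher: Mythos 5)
Your proposal follows essentially the same route as the paper: pull back $\alpha$ under $g$, check the contact condition separately on the mapping-torus part (where $g^*\alpha = h_1(c)\alpha_B + h_2(c)\,d\theta'$) and on the collar near $B'$ (where $g^*\alpha = \lambda\,\alpha_{B'} + \mu\,d\theta'$), and obtain the contact isotopy by parametrising $f$ and $h$. The one place you hedge --- needing to build $f,h$ out of the Lutz data to control the transition collar --- turns out to be unnecessary: the paper shows that the relevant quantity $\lambda\mu' - \lambda'\mu$ decomposes into three summands that are non-negative for \emph{any} $f,h$ with the stated monotonicity, at least one of which is positive at every point, so no further normalisation is required.
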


\begin{proof}
We first show that the push-off $B^+$ of the binding $B$ is a contact submanifold of
$(M, \xi)$.
The binding $B$ is a contact submanifold, so in a neighbourhood $B\times D^2$
(as described in Section~\ref{section:push-off}) we can assume our contact form
$\alpha$ to be
$$
\alpha = h_1\alpha_B + h_2 d\theta,
$$
where $\alpha_B$ is a contact form on $B$ and $(h_1(r), h_2(r))$ a Lutz pair.
To prove that the push-off $B^+$, which arises as the image of the embedding
$g\colon\thinspace B \rightarrow M$
(see Definition~\ref{def:push_off}),
is a contact submanifold of $M$, we have to show that $g^*\alpha$ is a contact form on $B$.
We will first verify this condition away from the binding $B'$ of $B$.
Here, $g$ sends an element $[x',\theta']$ in the mapping torus part of the open
book of $B$ to $([x',\theta'], c, \theta')\in B\times D^2$ for constant $c>0$.
So we have
$$
g^*\alpha = h_1(c)\alpha_B + h_2(c)d\theta'
$$
and $dg^*\alpha = h_1(c) d\alpha_B$.
Hence,
$$
g^*\alpha \wedge (dg^*\alpha)^{n-1} =
\big(h_1(c)\big)^n\alpha_B\wedge (d\alpha_B)^{n-1}
+ \big(h_1(c)\big)^{n-1} h_2(c) d\theta'\wedge (d\alpha_B)^{n-1}.
$$
Observe that $(h_1(c))^{n-1} h_2(c) > 0$. Furthermore,
since $d\theta'(R_{\alpha_B}) > 0$, the forms
$\alpha_B\wedge (d\alpha_B)^{n-1}$ and
$\big(h_1(c)\big)^{n-1} h_2(c) d\theta'\wedge (d\alpha_B)^{n-1}$
induce the same orientation,
i.e.\ $g^*\alpha$ is indeed a contact form away from the binding.

Now we inspect the situation near the binding $B'$ of $B$, i.e.\ we can work
in a neighbourhood $B' \times D^2 \times D^2$ (as described in
Section~\ref{section:push-off}).
The contact form $\alpha_B$ of the binding can be assumed to be of the form
$$
\alpha_B = g_1 \alpha_{B'} + g_2 d\theta',
$$
where $\alpha_{B'}$ is a contact form on $B'$
and $\big(g_1(r'), g_2(r')\big)$ a Lutz pair.
Thus, we have
$$
\alpha = h_1 g_1 \alpha_{B'} + h_1 g_2 d\theta' + h_2 d\theta.
$$
Recall that the defining embedding $g$ for the push-off is given by
$$
g(b', r', \theta') = \big(b', f(r'), \theta', h(r'), \theta'\big)
$$
in this neighbourhood.

We compute
$$
g^*\alpha = \lambda \alpha_{B'} + \mu d\theta'
$$
with
$$
\lambda (r') = (h_1 \circ h) (g_1 \circ f)(r')
$$
and
$$
\mu (r') = \big((h_1 \circ h) (g_2 \circ f)  + h_2 \circ h\big)(r').
$$
So we have
$$
g^*\alpha = \lambda' \alpha_{B'} + \lambda d\alpha_{B'} + \mu' d\theta',
$$
$$
(dg^*\alpha)^{n-1} = (n-1) \lambda^{n-2} (d\alpha_{B'})^{n-2}
\wedge (\lambda' dr' \wedge \alpha_{B'} + \mu' dr' \wedge d\theta')
$$
and
$$
(g^*\alpha) \wedge (dg^*\alpha)^{n-1}
= \frac{1}{r'} (n-1) \lambda^{n-2} (\lambda\mu' - \lambda'\mu)
\big(\alpha_{B'} \wedge (d\alpha_{B'})^{n-2} \wedge r'dr' \wedge d\theta'\big).
$$
It remains to show that the term $\lambda\mu' - \lambda'\mu$ is positive.
A calculation shows
\begin{eqnarray*}
\lambda\mu' - \lambda'\mu & = &
\underbrace{
(h_1 \circ h)^2 \big((g_1 \circ f) (g_2 \circ f)' - (g_1 \circ f)' (g_2 \circ f)
\big)}_{=: A}\\
& + &
\underbrace{
(h_1 \circ h)\big((h_2 \circ h)' (g_1 \circ f) - (g_1 \circ f)' (h_2 \circ h)
\big)}_{=: B}\\
& + &
\underbrace{
\big(-(h_1 \circ h)' (g_1 \circ f) (h_2 \circ h)\big)}_{=: C}.\\
\end{eqnarray*}
Observe that all three summands $A$, $B$, and $C$ are non-negative. It thus
suffices to show that at least one of them is positive.
Assume $C = 0$. Then either $r' = 0$ or $h' = 0$.
We will first deal with the case $h' = 0$. This happens exactly where $h\equiv c$.
But on this set, both $f$ and its derivative $f'$ are positive and
therefore
$$
A = \big(h_1 \circ h(c)\big)^2 f' \big((g_1 g_2' - g_1'g_2) \circ f\big) > 0,
$$
because $(g_1, g_2)$ is a Lutz pair, i.e.\ in particular
$(g_1 g_2' - g_1'g_2) > 0$.
Now consider $r' = 0$.
For small $r'$ we have $f\equiv 0$ and thus
$g_1 \circ f \equiv 1$ and $g_2 \circ f \equiv 0$.
Then $\lambda\mu' - \lambda'\mu$ reduces to
$$
(h_1 \circ h) (h_2 \circ h)' - (h_1 \circ h)' (h_2 \circ h)
= h' \big((h_1 h_2' - h_1' h_2) \circ h\big).
$$
As $h'$ is positive for small $r'$ and $(h_1, h_2)$ is a Lutz pair, this is
positive.
Hence,
$(g^*\alpha) \wedge (dg^*\alpha)^{n-1}$ is a volume form and so $(g^*\alpha)$ a
contact form, i.e.\ the push-off $B^+$ is indeed a contact submanifold.

Observe that by varying $c$ and the $\varepsilon_i$ in the definition of the push-off, we get parametrised versions of the functions $f$ and $h$ yielding a homotopy from $f$ to the identity and from $h$ to the constant zero function, which in turn gives a topological isotopy betweeen the push-off and the binding.
A similar calculation to the one above then
 shows that the push-off $B^+$ is furthermore contact isotopic
to the binding $B$.
\end{proof}


A first step of showing that the topological binding sum construction along the
push-off can be made compatible with the underlying contact structures is to
show that the push-off can be realised as a suitable abstract contact nested open book.
To this end, we first show that there exists a monodromy vector field
tangent to the push-off.

\begin{proposition}
\label{proposition:tangent_monodromy}
There exists a monodromy vector field (in the contact geometric sense defined above) tangent to the push-off.
\end{proposition}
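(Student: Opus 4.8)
The plan is to upgrade the topological construction of Section~\ref{section:monodromy} to the contact setting. Following the discussion after Theorem~\ref{thm:contact_fibre_sum}, fix an adapted contact form $\alpha$ together with a monodromy vector field $X$ of the ambient open book, normalised so that $X = \partial_\theta$ on a neighbourhood $B \times D^2$ in which $\alpha = h_1 \alpha_B + h_2\, d\theta$ for a Lutz pair $(h_1,h_2)$. The binding $(B,\alpha_B)$ is itself a contact open book with pages of $(B',\pi')$, hence carries its own monodromy vector field $X'$ with $X' = \partial_{\theta'}$ near $B'$ and with $\mathcal{L}_{X'} d\alpha_B$ vanishing on the pages of $B$. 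Extending $X'$ trivially over the $D^2$-factor and setting $\widetilde{X} := X + u(r) X'$, with $u$ the cut-off of Figure~\ref{fig:function_u}, reproduces the topological backbone: because $X'$ is tangent to the pages of $(B,\pi)$ we have $\pi^* d\theta(\widetilde{X}) = 1$, the field coincides with $\partial_\theta$ near $B$, and—by the same computation that realises the push-off as an abstract nested open book—$\widetilde{X}$ is tangent to $B^+$.

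It remains to enforce the one genuinely contact condition, namely that $\mathcal{L}_{\widetilde{X}} d\alpha$ restrict to zero on every page. As $X$ is already a monodromy vector field, this reduces to showing $\mathcal{L}_{u X'} d\alpha|_{\text{page}} = d\big(u\, i_{X'} d\alpha\big)\big|_{\text{page}} = 0$. Working in the Lutz model one computes
\[
i_{X'} d\alpha = -h_1'\, \alpha_B(X')\, dr + h_1\, i_{X'} d\alpha_B,
\]
and after differentiating the only surviving obstruction is the term $h_1\,\mathcal{L}_{X'} d\alpha_B$, restricted to the copy of $B$ sitting inside an ambient page. This is exactly where the naive field breaks down: although $X'$ is a monodromy vector field of the binding, so that $\mathcal{L}_{X'} d\alpha_B$ vanishes on the pages $\Sigma'_{\theta'}$ of $B$, it need not vanish on the whole of $B$; and demanding that it vanish on all of $B$ would force $X'$ to lie in the Reeb direction of $\alpha_B$, which has the wrong $\theta'$-speed to be tangent to $B^+$. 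This tension between the Reeb condition and tangency is the main obstacle.

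To resolve it I would exploit that tangency to $B^+$ constrains $\widetilde X$ only \emph{along} $B^+$, whereas the symplectic condition must hold on the full pages, leaving room to correct off $B^+$. By Proposition~\ref{proposition:contact_push-off} the push-off is a codimension-two contact submanifold whose normal directions may be taken inside the pages, so $B^+ \cap \Sigma_\theta$ is a symplectic submanifold of $(\Sigma_\theta, \omega)$ with $\omega = d\alpha|_{\Sigma_\theta}$. Note that the obstruction $h_1\,\mathcal{L}_{X'}d\alpha_B$ vanishes precisely on $B^+\cap\Sigma_\theta$, since there it is restricted to a binding page. The page-restriction of $\mathcal{L}_{\widetilde X}d\alpha$ is moreover exact, equal to $d_{\text{page}}\beta$ with $\beta := (u\, i_{X'}d\alpha)|_{\text{page}}$, and $\beta$ vanishes wherever $u = 0$. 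Hence I would add a page-tangent correction $Z$ with $i_Z\omega = -\beta$ (modified by a closed term), so that $\mathcal{L}_Z d\alpha|_{\text{page}}$ cancels the obstruction; keeping $Z$ page-tangent leaves $\pi^*d\theta(\widetilde X + Z) = 1$ untouched, and $Z \equiv 0$ where $u = 0$ preserves the normalisation $\widetilde X = \partial_\theta$ near $B$. Because the obstruction vanishes along the symplectic submanifold $B^+\cap\Sigma_\theta$, one can arrange $Z$ to be tangent to $B^+$ as well—a relative Poincaré-lemma argument on the pages—so that $\widetilde X + Z$ satisfies all three requirements simultaneously.

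The delicate point, which I expect to absorb most of the work, is carrying this correction through the region where the push-off dips back into the neighbourhood of $B$ along $B^+\cap B \supset B'$: there the normalisation near $B$ and the tangency to $B^+$ must be reconciled at once, exactly as the cut-off $u$ and the functions $f,h$ were balanced in Section~\ref{section:monodromy}. I would choose the support of $Z$ (equivalently the cut-off of its defining Hamiltonian) away from $\{r = 0\}$ and compatible with the same radial interpolation governing the push-off, so the corrected field still equals $\partial_\theta$ near the binding. Granting this, $\widetilde X + Z$ is a monodromy vector field tangent to $B^+$, which proves the proposition; its time-$2\pi$ flow then realises the push-off as the contact abstract nested open book whose monodromy is the $\phi \circ \submonodromy \circ \twistmap$ recorded in Lemma~\ref{lem:abstract_framed_sub}.
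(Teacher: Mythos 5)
Your overall strategy -- start from the topological field $\widetilde{X} = X + u(r)X'$ of Section~\ref{section:monodromy} and add a page-tangent correction $Z$ to restore the condition $\mathcal{L}_{\bullet}d\alpha|_{\mathrm{page}}=0$ -- is genuinely different from the paper's, and it founders at exactly the step you flag as delicate. First, the identification of the obstruction as $h_1\,\mathcal{L}_{X'}d\alpha_B$ is incomplete: differentiating $u\,i_{X'}d\alpha = -u\,h_1'\,\alpha_B(X')\,dr + u\,h_1\, i_{X'}d\alpha_B$ produces cross terms such as $du\wedge i_{X'}d\alpha$, $u\,h_1'\,dr\wedge i_{X'}d\alpha_B$ and $u\,h_1'\,d\bigl(\alpha_B(X')\bigr)\wedge dr$, all of the form $dr\wedge(\text{1-form on }B)$; since the page contains both $\partial_r$ and the $B$-directions, these do \emph{not} vanish on the page, so the obstruction does not reduce to the single term you analyse, and the claim that it ``vanishes precisely on $B^+\cap\Sigma_\theta$'' is not established. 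Second, and more fundamentally, the tangency argument confuses two different vanishing conditions. What you know (at best) is that the \emph{pullback} of the obstruction $2$-form to $N:=B^+\cap\Sigma_\theta$ vanishes, because there it restricts to a binding page. A relative Poincar\'e lemma built on that hypothesis yields a primitive $\gamma$ whose pullback to $N$ vanishes, i.e.\ $i_Z\omega|_{TN}=0$ for the vector field $Z$ defined by $i_Z\omega=-\gamma$; but that condition says $Z_p\in (T_pN)^{\omega}$, the symplectic \emph{orthogonal} of $N$ -- precisely the opposite of tangency. Tangency $Z_p\in T_pN$ requires $i_{Z_p}\omega$ to annihilate $(T_pN)^{\omega}$, which is a condition on the values of the correcting $1$-form in the normal directions along $N$, and that is not controlled by any Poincar\'e-lemma statement about pullbacks; it would hold automatically only if the obstruction $2$-form vanished \emph{pointwise} on all of $T_p\Sigma_\theta$ along $N$, which fails (it involves $\mathcal{L}_{X'}\alpha_B$ and $\mathcal{L}_{X'}d\alpha_B$ evaluated on all of $T_pB$, on which a monodromy vector field of $B$ imposes no constraint).

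For contrast, the paper avoids this tension entirely: after reducing to a trivial mapping torus $\Sigma\times S^1$, it projects $B^+\cap(\Sigma\times\{t\})$ fibrewise to $\Sigma$, obtaining a loop of symplectic submanifolds $\Sigma'_t$, and invokes Auroux's version of Banyaga's symplectic isotopy extension theorem to realise this loop by an ambient \emph{symplectic} isotopy $\phi_t$ supported in an annular neighbourhood of the boundary. The generating vector field of $\phi_t$, lifted to $\Sigma\times S^1$ and spliced into an arbitrary monodromy vector field, is tangent to $B^+$ by construction and symplectic on each page, so the Lie-derivative condition comes for free rather than having to be repaired afterwards. If you want to salvage your route, you would have to replace the Poincar\'e-lemma step by an explicit choice of a function $h$ vanishing on $N$ whose normal derivative along $(TN)^{\omega}$ matches $\beta$ there (smoothly and periodically in $\theta$) -- at which point you are essentially rebuilding the isotopy extension theorem by hand.
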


\begin{proof}
Without loss of generality, we can assume that the ambient contact manifold $(M, \ker\alpha)$
with contact open book $(B, \pi)$
arises from an abstract contact open book $(\Sigma, d\lambda, \phi)$ by a generalised
Thurston--Winkelnkemper construction.
The push-off $B^+$ of the binding $B$ is contained in a
trivial neighbourhood $B\times D^2$, on which the monodromy $\phi$ restricts
to the identity and the contact form $\alpha$ is given by
$\alpha = h_1 \alpha_B + h_2 d\theta$, where $(h_1, h_2)$ is a Lutz pair and
$\alpha_B$ a contact form on $B$ with the induced contact structure.
Hence, we can furthermore assume that $\phi = \id_\Sigma$, which implies that
$\partial_\theta$ is a monodromy vector field.

By construction, the intersection of the push-off $B^+$ with the fibres of $\pi$
is a cylinder over the binding $B'$ of the compatible open book decomposition with
page $\Sigma'$ used in the construction of the push-off, i.e.\
for some small constant $k > 0$, we have
$$
B^+ \cap \pi^{-1}(\theta) \cap \{r\leq\varepsilon\} = B'\times (0, k]
\subset B\times D^2.
$$
We will now work in $M\setminus (B\times D^2_\varepsilon) \cong \Sigma\times S^1$
with $\varepsilon < k$
(i.e.\ we will work in a trivial mapping torus).

Observe that $B^+ \cap \big(\Sigma \times \{\theta\}\big)$ is a
codimension $2$ symplectic submanifold with trivial normal bundle of
$(\Sigma \times \{\theta\}, d\alpha|_{\Sigma \times \{\theta\}})$. Also,
the symplectic structure on $\Sigma \times \{\theta\}$ induced by $d\alpha$ is
independent of $\theta$. Thus, we get a fibre-wise symplectic projection
$$
p\colon\thinspace \Sigma\times S^1\rightarrow \Sigma.
$$

This defines a family of symplectic submanifolds
$$
\Sigma'_t := p\big(B^+\cap(\Sigma\times\{t\})\big)
$$
of $\Sigma$ which all coincide near the boundary.

Auroux's version of Banyaga's isotopy extension theorem
(see \cite[Proposition~4]{Auroux1997}, cf.~\cite[Theorem~3.19]{McDuff1998}
and the proof of Corollary~\ref{corollary:abstract_nested} below)
for symplectic submanifolds then yields a symplectic isotopy
$$
\phi_t\colon\thinspace \Sigma \rightarrow \Sigma
$$
with $\phi_t(\Sigma'_0) = \Sigma'_t$
in such a way that it is equal to the identity in a neighbourhood
$U_1$ of $\partial\Sigma$
and outside a bigger neighbourhood $U_2$ of the boundary.

Differentiating $\phi_t$ yields a time-dependent vector field $X_t$ on
$\Sigma$,
which can be assumed to coincide for $t=0$ and $t=2\pi$
(it extends a vector field along the submanifold $\Sigma'$ with that property).
Thus, $X_t$ lifts to a vector field $X$ on $\Sigma \times S^1$ with $d\theta(X) = 1$,
whose projection to each fibre $\Sigma \times \{\theta\}$ is symplectic.
Furthermore, the vector field $X$ is equal to $\partial_\theta$ inside
$U_1 \times S^1$ and outside $U_2 \times S^1$. To simplify notation, we set
$V := (U_2\setminus U_1)\times S^1$.

Now given any monodromy vector field $Y$ which is equal to $\partial_\theta$ on
$U_2\times S^1$, we can replace $Y$ by $X$ over $V$
to get a vector field $\widetilde{Y}$, which is
tangent to the push-off by construction.
We claim that $\widetilde{Y}$ is a monodromy vector field.

Indeed, we have $d\theta(\widetilde{Y}) = 1$ and near the binding $\widetilde{Y}$
equals $\partial_\theta$.
Furthermore, the Lie derivative of $d\alpha$ with respect to $\widetilde{Y}$
coincides with $\mathcal{L}_Yd\alpha$ outside of $V$ and with
$\mathcal{L}_X d\alpha$ on $V$. Hence, as $Y$ is a monodromy vector field and
$X$ is symplectic on pages, the restriction of 
$\mathcal{L}_{\widetilde{Y}}d\alpha$ to any page vanishes, which means that
$\widetilde{Y}$ is a monodromy vector field tangent to the push-off $B^+$.
\end{proof}

\begin{corollary}
\label{corollary:abstract_nested}
The push-off is an abstract nested open book of an abstract open book description
with page $\Sigma$ and monodromy $\phi$, where $\phi$ restricts to
$$
\phi|_{\Sigma'\times D^2} = \phi' \times \id_{D^2}
$$
for a symplectomorphism $\phi'\colon\thinspace\Sigma'\rightarrow\Sigma'$
in a neighbourhood $\Sigma'\times D^2$ of $\Sigma'$ given by the symplectic normal
bundle.
\qed
\end{corollary}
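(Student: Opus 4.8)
The plan is to obtain the corollary by combining Proposition~\ref{proposition:tangent_monodromy} with the refinement of its proof indicated in Remark~\ref{remark:specialMonodromy}. First I would take the monodromy vector field $\widetilde{Y}$ tangent to the push-off $B^+$ produced in Proposition~\ref{proposition:tangent_monodromy} and pass to its time-$2\pi$ flow. This flow is precisely the monodromy $\phi$ of an abstract open book with page $\Sigma \cong \overline{\pi^{-1}(0)}$, and because $\widetilde{Y}$ is tangent to $B^+$, the resulting $\phi$ preserves the slice $\Sigma' = B^+\cap\Sigma$. Since $\Sigma'$ is a symplectic submanifold meeting $\partial\Sigma$ in the product form dictated by the outward-pointing Liouville collar, this already realises $\Sigma'$ as a contact abstract nested open book in the sense of Definition~\ref{def:ContactNested}, which proves the first assertion.

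The remaining point is to upgrade $\phi$ so that it splits as $\phi'\times\id_{D^2}$ near $\Sigma'$. For this I would revisit the symplectic isotopy $\phi_t$ with $\phi_t(\Sigma'_0)=\Sigma'_t$ that transports the family of push-off slices. By the symplectic neighbourhood theorem (\cite[Theorem~3.30]{McDuff1998}) I can identify a tubular neighbourhood of $\Sigma'$ with $\Sigma'\times D^2$ carrying the split symplectic form. Since the slices $\Sigma'_t$ agree near the boundary and are governed only by their position in the ambient page, I would extend the relevant infinitesimal data trivially in the $D^2$-direction and feed this split extension into Auroux's version of Banyaga's isotopy extension theorem (\cite[Proposition~4]{Auroux1997}). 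Because the extending vector field is built fibrewise from the normal component of the motion, a normal datum that is constant along the disc fibres yields an ambient isotopy with no $D^2$-component; its time-$2\pi$ map therefore fixes each disc fibre and restricts to a product $\phi'\times\id_{D^2}$, where $\phi'$ is a symplectomorphism of $(\Sigma', d\lambda|_{\Sigma'})$.

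The main obstacle is exactly this fibre-invariance: one must guarantee that the isotopy extension can be made simultaneously symplectic, supported in the prescribed collar $U_2\setminus U_1$, and trivial on the disc factor. The first two properties are already supplied by Auroux's theorem as used in Proposition~\ref{proposition:tangent_monodromy}; the third is the genuinely new input and rests on the split model of the neighbourhood. The care needed is in checking that this fibre-invariance survives the smoothing of the extending vector field near $\partial\Sigma'$ and near the zero section, and that the restriction $\phi'$ is indeed a symplectomorphism of the page $\Sigma'$. Both are routine once the split normal form is in place, so I expect no further conceptual difficulty beyond correctly organising these compatibilities.
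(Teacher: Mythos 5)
Your proposal is correct and follows essentially the same route as the paper: the paper likewise derives the corollary from Proposition~\ref{proposition:tangent_monodromy} together with the observation in Remark~\ref{remark:specialMonodromy} that the split symplectic neighbourhood $\Sigma'\times D^2$ lets one extend the family of push-off slices trivially in the disc direction before applying Auroux's isotopy extension theorem, so that the resulting isotopy is fibre-invariant and the time-$2\pi$ map splits as $\phi'\times\id_{D^2}$. Your additional attention to the fibre-invariance surviving the cut-off near $\partial\Sigma'$ is a reasonable point of care but does not change the argument.
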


\begin{proof}
The main ingredient in the proof is
Auroux's version of Banyaga's isotopy extension theorem \cite[Proposition~4]{Auroux1997} (or rather its proof) which states that for a familiy $W_t$ ($t\in [0,1]$) of symplectic submanifolds of a symplectic manifold $X$, there exist symplectomorphisms $\phi_t\colon\thinspace X\rightarrow X$ (depending continuously on $t$) such that $\phi_0 = \id$ and $\phi_t(W_0) = W_t$.

By the symplectic neighbourhood theorem (see \cite[Theorem~3.30]{McDuff1998})
a neighbourhood of $\Sigma'$ in $\Sigma$ can be written as
$\Sigma' \times D^2$ with split symplectic form.
A smooth family of symplectic submanifolds can be assumed to arise as the image
of an isotopy.
The first step in proving Auroux's theorem is
to extend this isotopy to an open neighbourhood. In our case, the symplectic
neighbourhood theorem allows us to do this in a trivial way.
Applying Auroux's construction to this trivial
extension yields an isotopy which is invariant in fibre direction, i.e.\ the
time-$2\pi$ map of the resulting isotopy restricts to a map of the form
$\phi|_{\Sigma'\times D^2} = \phi' \times \id_{D^2}$ on the neighbourhood of $\Sigma'$.
Hence, we have the following corollary.
\end{proof}

\begin{remark}
Observe that by requiring the monodromy to be trivial in fibre direction,
the resulting abstract nested open book is also a framed nested abstract open book
in the sense of Section~\ref{section:monodromy}.
The monodromy does not correspond to the Chinese
burn $\Psi$ but to the concatenation $\mathcal{D}\circ\Psi$ with the twist map
$\mathcal{D}$, which was used to turn a nested open book into a framed nested
open book in the topological setting and exactly ensured triviality in fibre direction.
\end{remark}

Having described the push-off as an abstract nested open book, it is natural
to perform a fibre sum construction of the ambient abstract open books.
This will then yield a contact structure adapted to the resulting natural open book
decomposition. However, it is a priori unclear whether this contact structure
is indeed the contact structure resulting from the contact fibre connected sum.

We will first show that the symplectic fibre connected sum
(cf.\ \cite[Section~7.2]{McDuff1998})
 of exact symplectic
manifolds is exact under suitable conditions.

\begin{remark}
\label{rem:RelSum}
Note that the symplectic fibre sum can be adapted to work in a relative setting.
Let $W$ be a symplectic manifold with contact type boundary and let $X$ be a
codimension $2$ symplectic submanifold with trivial normal bundle and contact
type boundary which is a contact submanifold of $\partial X$.
Let $\partial W\times (-\epsilon,0]$ be a collar neighbourhood of $\partial W$
given by a Liouville field transverse to $\partial W$ and suppose that
$$
X\cap \big(\partial W\times (-\epsilon,0]\big) = \partial X\times (-\epsilon,0].
$$
The symplectic fibre sum can be performed in this setting with the effect on the
boundary being a contact fibre connected sum.
\end{remark}

The following technical lemma will be useful to interpolate between Liouville
forms which agree on a symplectic submanifold.

\begin{lemma}
\label{lemma:exact}
Let $M$ be a manifold and let $\lambda_0$ and $\lambda_1$ be two $1$-forms on
$M \times D^2$ such that $d\lambda_0 = d\lambda_1$ and
$\lambda_0|_{T(M\times\{0\})} = \lambda_1|_{T(M\times\{0\})}$.
Then $\lambda_1 - \lambda_0$ is exact.\qed
\end{lemma}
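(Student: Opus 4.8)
The plan is to reduce the statement to the relative Poincar\'e lemma. Set $\beta := \lambda_1 - \lambda_0$. The hypothesis $d\lambda_0 = d\lambda_1$ says that $\beta$ is closed, while the condition $\lambda_0|_{T(M\times\{0\})} = \lambda_1|_{T(M\times\{0\})}$ says precisely that $i^*\beta = 0$, where $i\colon M \times \{0\} \hookrightarrow M \times D^2$ denotes the inclusion. Thus the lemma amounts to showing that a closed $1$-form on $M \times D^2$ whose pullback to the slice $M \times \{0\}$ vanishes is automatically exact.

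First I would invoke the deformation retraction of $M \times D^2$ onto $M \times \{0\}$ given by the fibrewise scaling $H\colon (M \times D^2)\times[0,1] \to M \times D^2$, $H(x,z,t) = (x,tz)$; this is well defined because $D^2$ is convex, so $tz \in D^2$ for all $t \in [0,1]$. Writing $r := H(\cdot,0) = i\circ p$ for the retraction, with $p\colon M \times D^2 \to M \times \{0\}$ the projection, and noting $H(\cdot,1) = \id$, the standard chain-homotopy formula associated with $H$ reads $\id^*\omega - r^*\omega = d(K\omega) + K(d\omega)$ for every form $\omega$, where $K$ is the homotopy operator obtained by fibre integration of $H^*\omega$ against $\partial_t$ over $[0,1]$.

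Applying this to $\omega = \beta$, the term $K(d\beta)$ vanishes since $\beta$ is closed, and $r^*\beta = p^*(i^*\beta) = 0$ since $i^*\beta = 0$. Hence $\beta = d(K\beta)$, exhibiting $K\beta$ as an explicit primitive and proving that $\beta = \lambda_1 - \lambda_0$ is exact. Equivalently, and more abstractly, $i^*$ induces an isomorphism $H^1_{dR}(M\times D^2) \to H^1_{dR}(M\times\{0\})$ by homotopy invariance of de Rham cohomology, and $i^*[\beta] = [i^*\beta] = 0$ then forces $[\beta] = 0$.

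The computations here are entirely routine, so I do not expect a genuine obstacle; the only point requiring a moment's care is that $D^2$ is a manifold with boundary, so I would note that de Rham theory and the homotopy operator apply verbatim in this setting and that the scaling homotopy indeed maps $M \times D^2$ into itself. The primitive $K\beta$ produced above is moreover built fibrewise, which is exactly what makes the lemma convenient for interpolating between Liouville forms that agree on a symplectic submanifold in the subsequent gluing arguments.
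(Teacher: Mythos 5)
Your proposal is correct and follows essentially the same route as the paper: both arguments are the relative Poincar\'e lemma applied to the closed form $\beta=\lambda_1-\lambda_0$ with vanishing pullback to $M\times\{0\}$, using the fibrewise radial contraction of $M\times D^2$ onto $M\times\{0\}$. Indeed, the primitive $K\beta$ produced by your homotopy operator coincides with the paper's explicitly defined function $h(p,q)=\int_{\gamma_{(p,q)}}\eta$, since the portion of the paper's path lying in $M\times\{0\}$ contributes nothing to the integral.
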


\begin{proposition}
Let $(W_i, \omega_i = d\lambda_i)$ ($i=0, 1$), be exact symplectic manifolds
with symplectomorphic submanifolds $X_i \subset W_i$ of codimension $2$
whose normal bundles are trivial.
Assume furthermore that the restriction $\lambda|_{TX_i}$ of the Liouville
forms to these submanifolds coincide through the above identification.
Then the symplectic fibre sum of the $W_i$ along the $X'_i$ is again
exact symplectic.
\end{proposition}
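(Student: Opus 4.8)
The plan is to reduce everything to a standard local model near the two gluing loci and then to glue the given Liouville forms across a neck whose symplectic profile we are free to choose. First I would invoke the symplectic neighbourhood theorem (\cite[Theorem~3.30]{McDuff1998}) to identify tubular neighbourhoods of $X_0$ and $X_1$ with $X\times D^2(\delta)$, where $X$ is the common symplectomorphic model, $X_i$ corresponds to $X\times\{0\}$, and $\omega_i$ becomes the split form $\omega_X\oplus\omega_{\mathrm{std}}$ with $\omega_X=d\lambda_X$ the symplectic form on $X$ and $\omega_{\mathrm{std}}=r\,dr\wedge d\theta$ the area form on the disc. The symplectic fibre sum then removes $X\times D^2(\epsilon)$ from each piece and glues the resulting ends by a fibre-orientation-reversing map of the form $(x,r,\theta)\mapsto(x,\,\cdot\,,-\theta)$; away from these neighbourhoods $\omega$ agrees with $\omega_i$ and is exact with primitive $\lambda_i$, so all the work is local.

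Next I would normalise the two Liouville forms near $X_i$. Set $\lambda_{\mathrm{std}}:=\lambda_X+\tfrac{r^2}{2}\,d\theta$, a primitive of the split form which restricts to $\lambda_X$ on $X\times\{0\}$. By hypothesis $\lambda_i|_{TX_i}=\lambda_X=\lambda_{\mathrm{std}}|_{TX_i}$ and $d\lambda_i=d\lambda_{\mathrm{std}}$ on $X\times D^2(\delta)$, so Lemma~\ref{lemma:exact} applies and yields $\lambda_i-\lambda_{\mathrm{std}}=dh_i$ for a function $h_i$. Choosing a cut-off $\chi$ that equals $1$ near $X_i$ and is supported in the neighbourhood, I would replace $\lambda_i$ by $\lambda_i-d(\chi h_i)$; this leaves $\omega_i$ and the behaviour of $\lambda_i$ away from $X_i$ untouched, while arranging $\lambda_i=\lambda_{\mathrm{std}}=\lambda_X+\tfrac{r_i^2}{2}\,d\theta_i$ on a smaller neighbourhood of $X_i$.

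The heart of the argument is the choice of symplectic form on the neck. I would write the neck as $X\times(-\rho,\rho)_s\times S^1_\theta$ and put $\omega=\omega_X+\beta(s)\,ds\wedge d\theta$ with $\beta>0$, which is symplectic, together with the primitive $\lambda_X+B(s)\,d\theta$ where $B'=\beta$. On the $W_0$ end, writing $r_0=\epsilon+s$ and $\theta=\theta_0$, the normalised form $\lambda_0$ reads $\lambda_X+\tfrac{(\epsilon+s)^2}{2}\,d\theta$, so I set $B(s)=\tfrac{(\epsilon+s)^2}{2}$ there; on the $W_1$ end, where the gluing forces $r_1=\epsilon-s$ and $\theta_1=-\theta$, the form $\lambda_1$ reads $\lambda_X-\tfrac{(\epsilon-s)^2}{2}\,d\theta$, so I set $B(s)=-\tfrac{(\epsilon-s)^2}{2}$ there. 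Thus $B$ must increase from $-\tfrac{\epsilon^2}{2}$ to $+\tfrac{\epsilon^2}{2}$ across the neck, and since its total increment $\epsilon^2$ is positive and the endpoint derivatives $\epsilon+s$, $\epsilon-s$ are already positive, one can interpolate $B$ monotonically with $\beta=B'>0$ throughout. With this choice the three pieces $\lambda_0$, $\lambda_X+B\,d\theta$ and $\lambda_1$ agree exactly on the overlaps and glue to a global primitive $\lambda$ of $\omega$ on the fibre sum, proving exactness.

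The step I expect to be the main obstacle, and the conceptual crux, is precisely the sign bookkeeping that makes this monotone interpolation possible. With the naive neck (keeping $\omega=\omega_i$ right up to the gluing) the two primitives differ by the closed but non-exact term $\tfrac{\epsilon^2}{2}\,d\theta$, an honest flux through the vanishing cycle that cannot be removed by closed forms pulled back from the $W_i$, since every such form has vanishing period on a meridian (the meridian bounds a disc in $W_i$). What rescues exactness is that the gluing reverses the fibre orientation: this flips the sign of the $d\theta$-coefficient, so the required value of $B$ is negative on the $W_1$ side and positive on the $W_0$ side, and the necessary transition is an \emph{increase}, compatible with $\beta=B'>0$. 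Had the gluing preserved orientation, $B$ would have to decrease and no symplectic neck profile could carry a matching primitive. Since the fibre sum construction leaves the profile $\beta$ as auxiliary data, it suffices to exhibit one choice making the result exact. Finally, because the whole construction is supported near the $X_i$, the collar product structure of Remark~\ref{rem:RelSum} lets the same local argument run in the relative setting with contact-type boundary, realising the contact fibre connected sum on the boundary.
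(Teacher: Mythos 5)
Your proposal is correct and, in its first two steps, coincides with the paper's own proof: both invoke the symplectic neighbourhood theorem to put the split form $\omega' + s\,ds\wedge d\vartheta$ on a neighbourhood $X\times D^2$, and both apply Lemma~\ref{lemma:exact} to write $\lambda_i = \lambda' + \tfrac12 s^2\,d\vartheta + dh_i$ there. Where you genuinely diverge is the gluing step. The paper declares the Liouville form on the annulus of identification to be $\lambda' + \tfrac12 s^2\,d\vartheta + d\bigl((1-g)h_0+gh_1\bigr)$, interpolating only the exact discrepancies $dh_i$; this leaves implicit which of the two coordinate systems the term $\tfrac12 s^2\,d\vartheta$ refers to, and under the orientation-reversing identification $s_1^2 = \epsilon^2 - s_0^2$, $\vartheta_1 = -\vartheta_0$ the two candidate meanings differ by the closed, non-exact flux term $\tfrac{\epsilon^2}{2}\,d\vartheta_0$ on the annulus bundle. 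Your proof isolates exactly this point and resolves it by exploiting the freedom in the neck profile of the sum (cf.\ \cite{McDuff1998}): taking $\omega = \omega_X + \beta(s)\,ds\wedge d\theta$ with primitive $\lambda_X + B(s)\,d\theta$, the boundary conditions $B = \tfrac{(\epsilon+s)^2}{2}$ and $B = -\tfrac{(\epsilon-s)^2}{2}$ force a monotone \emph{increase} of $B$, which is compatible with $\beta = B' > 0$ precisely because the gluing reverses the fibre orientation. So your route buys an honest treatment of the one step the paper elides -- and your observation that the flux cannot be absorbed by closed forms extending over the $X_i$ shows the extra care is not optional -- at the cost of committing to a specific (but standard, deformation-equivalent) model of the neck; the paper's version is shorter, but its final formula should be read with your correction in mind.
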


\begin{proof}
We will drop the indices in the first part of the proof and work in $W_0$
and $W_1$ separately.
The submanifold $X \subset W$ is symplectic with symplectic form
$$
\omega' := \omega|_{TX} = (d\lambda)|_{TX} = d(\lambda|_{TX}),
$$
i.e.\ $X$ is exact symplectic and a Liouville form is given by
$\lambda' := \lambda|_{TX}$.
As $X$ is of codimension $2$ and has trivial normal bundle,
the symplectic neighbourhood theorem (see \cite[Theorem~3.30]{McDuff1998})
allows us to write a neighbourhood of $X$ in $W$ as
$X \times D^2$ with symplectic form given as
$\omega = \omega' + sds\wedge d\vartheta$, where $s$ and $\vartheta$ are
polar coordinates on the $D^2$-factor.
In these coordinates one (local) primitive of $\omega$ is given by
$\lambda' + 1/2 s^2 d\vartheta$.
Hence, by Lemma~\ref{lemma:exact}, we have
$$
\lambda|_{X\times D^2} = \lambda' + \frac{1}{2}s^2 d\vartheta + dh
$$
for an appropriate function $h$.

Now by assumption, the restriction of the Liouville forms to the
symplectic submanifolds $X_i$
agree, so in the coordinates adapted to the symplectic normal bundle
as above they are
$\lambda_i = \lambda' + {1}/{2}s^2 d\vartheta + dh_i$.

It follows that the symplectic fibre sum (cf.\ \cite[Section~7.2]{McDuff1998})
along the symplectic submanifolds $X_i$ is again exact symplectic.
The Liouville form can be chosen to coincide with the original ones outside the
area of identification and is given by
$$
\lambda' + \frac{1}{2}s^2 d\vartheta + d\big((1-g)h_0 + gh_1\big)
$$
on the annulus of identification.
Here $g$ is a function on the annulus equal to
$1$ near one boundary and equal to $0$ near the other boundary component.
\end{proof}

We can now apply the proposition to the setting of abstract open books, which
gives a contact version of the topological fibre sum of nested open books described
in Remark~\ref{rem:framedsob}.

\begin{corollary}
Let $\Sigma'_i \subset (\Sigma_i, d\lambda_i, \phi_i)$, $i=0,1$, be two contact
abstract open books with trivial normal bundle.
Let $\psi\colon\thinspace \nu(\Sigma'_0)\rightarrow \nu(\Sigma'_1)$
be a symplectomorphism of neighbourhoods with $\psi(\Sigma'_0) = \Sigma'_1$
satisfying $\psi\circ\phi_0 = \phi_1\circ\psi$
and $(\psi^*\lambda_1)|_{T\Sigma'_0} = \lambda_0|_{T\Sigma'_0}$.
Then the fibre connected sum of the $\Sigma_i$ along the $\Sigma'_i$ with respect to
$\psi$ yields again an abstract open book.

In particular, the symplectic fibre sum of two abstract open books  along the
push-offs of their contactomorphic bindings yields again an abstract open book.
Hence,
the topological binding sum along two contactomorphic binding components carries
a contact structure which is adapted to the natural open book structure and
coincides with the original structures outside a neighbourhood of the push-offs
of the respective binding components.
\end{corollary}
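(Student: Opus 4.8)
The plan is to establish the three assertions in turn, building the abstract contact open book of the fibre sum from its constituents: its symplectic page is the symplectic fibre sum $\widetilde{\Sigma} := \#_\psi(\Sigma_0 \sqcup \Sigma_1)$ of the two pages along $\Sigma'_0,\Sigma'_1$, its monodromy is obtained by gluing $\phi_0$ and $\phi_1$, and its Liouville form is the one produced by the preceding Proposition. First I would invoke that Proposition directly: its hypotheses are exactly that the $\Sigma_i$ are exact symplectic with symplectomorphic codimension-two submanifolds $\Sigma'_i$ of trivial normal bundle on which the restricted Liouville forms agree, and the latter is the standing assumption $(\psi^*\lambda_1)|_{T\Sigma'_0} = \lambda_0|_{T\Sigma'_0}$. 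This yields an exact symplectic $(\widetilde{\Sigma}, d\widetilde{\lambda})$ whose Liouville form coincides with $\lambda_i$ away from the annulus of identification and equals $\lambda' + \frac{1}{2}s^2 d\vartheta + d\bigl((1-g)h_0 + gh_1\bigr)$ on it.

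Next I would glue the monodromies. The intertwining relation $\psi\circ\phi_0 = \phi_1\circ\psi$ says precisely that a point $p \in \nu(\Sigma'_0)$ identified with $\psi(p)$ has its image $\phi_0(p)$ identified with $\phi_1(\psi(p))$, so $\phi_0$ and $\phi_1$ descend to a well-defined diffeomorphism $\widetilde{\phi}$ of the quotient $\widetilde{\Sigma}$, exactly as in the topological Remark~\ref{rem:framedsob}. Since the symplectic fibre sum alters the primitive only by the exact term above, we have $d\widetilde{\lambda} = d\lambda_i$ wherever $\widetilde{\phi} = \phi_i$, and each $\phi_i$ is a symplectomorphism; hence $\widetilde{\phi}$ preserves $d\widetilde{\lambda}$.

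The step I expect to be the main obstacle is the boundary condition: that the Liouville vector field $\widetilde{Y}$ of $d\widetilde{\lambda}$ remains transverse to $\partial\widetilde{\Sigma}$ pointing outwards (and that $\widetilde{\phi}$ is the identity near the new boundary). This is a relative fibre sum in the sense of Remark~\ref{rem:RelSum}, whose effect on the boundary is a contact fibre connected sum of the bindings. The key is the collar condition of Definition~\ref{def:ContactNested}, namely $\Sigma'_i \cap \bigl(\partial\Sigma_i\times(-\varepsilon,0]\bigr) = \partial\Sigma'_i\times(-\varepsilon,0]$: it lets me choose the symplectic normal neighbourhoods so that near $\partial\Sigma_i$ the Liouville form is already in the split standard form $\lambda' + \frac{1}{2}s^2 d\vartheta$. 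Then the interpolating primitives $h_i$ furnished by Lemma~\ref{lemma:exact} vanish near the boundary, so the modifying term $d\bigl((1-g)h_0 + gh_1\bigr)$ and its differential vanish there, $\widetilde{\lambda}$ agrees with $\lambda_i$ near $\partial\widetilde{\Sigma}$, and $\widetilde{Y}$ is unchanged and still outward-transverse. This produces the abstract contact open book $(\widetilde{\Sigma}, d\widetilde{\lambda}, \widetilde{\phi})$ and settles the first assertion.

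For the \emph{in particular} clause I would specialise to push-offs. By Corollary~\ref{corollary:abstract_nested} the push-off of each binding is an abstract nested open book whose ambient monodromy splits as $\phi'\times\id_{D^2}$ near $\Sigma'_i$; a contactomorphism of the two bindings then furnishes the symplectomorphism $\psi$ with $(\psi^*\lambda_1)|_{T\Sigma'_0} = \lambda_0|_{T\Sigma'_0}$, while the split form of the monodromies together with the intertwining of the binding monodromies gives $\psi\circ\phi_0 = \phi_1\circ\psi$. The hypotheses of the first assertion are thus met, so the fibre sum along the push-offs is again an abstract open book. Finally, for the closing \emph{Hence}, this abstract open book defines via the generalised Thurston--Winkelnkemper construction a contact structure supported by the natural open book of the binding sum (Theorem~\ref{thm:main1}). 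Because the construction is local and agrees with the original data outside a neighbourhood of the push-offs, and the contact fibre connected sum of Theorem~\ref{thm:contact_fibre_sum} likewise coincides with $\xi$ outside tubular neighbourhoods of the submanifolds, both contact structures are built from the same standard Lutz-pair model on the gluing region; matching them there identifies the supported contact structure with the one arising from the contact fibre connected sum, completing the proof.
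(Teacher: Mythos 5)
Your proposal is correct and follows essentially the same route as the paper: exactness of the glued page via the preceding proposition, the relative fibre sum of Remark~\ref{rem:RelSum} together with the collar condition of Definition~\ref{def:ContactNested} for the outward-pointing Liouville field at the boundary, the intertwining relation for descending the monodromies, and Corollary~\ref{corollary:abstract_nested} plus the generalised Thurston--Winkelnkemper construction for the push-off specialisation. One caveat: your closing claim that the resulting adapted contact structure can be identified with the one produced by the contact fibre connected sum goes beyond what the corollary asserts --- that identification is precisely the content of Theorem~\ref{thm:contact}, which the paper proves separately and with considerably more work (in particular Proposition~\ref{proposition:normals_in_page} on the conformal symplectic normal bundle being tangent to the pages), so the one-line matching argument you sketch there should not be relied upon.
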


\begin{proof}
We have to show that the symplectic fibre sum is again an exact symplectic
manifold with a Liouville vector field pointing outwards at the boundary
and that the original monodromies give rise to a monodromy on the fibre sum.
The latter is ensured by the condition $\psi\circ\phi_0 = \phi_1\circ\psi$.
Exactness follows almost immediately from the preceding proposition.
Observe that $\Sigma'_i$ being contact abstract nested open books
(cf.\ Definition~\ref{def:ContactNested}) allows us
to perform a relative version of the
symplectic fibre connected sum as described in Remark~\ref{rem:RelSum}.
So by the proposition the
fibre sum yields an exact symplectic manifold with boundary
with Liouville field still pointing outwards.

Note that the description of the push-off as an abstract open book as in
Corollary~\ref{corollary:abstract_nested} fulfils the hypothesis of the first
part of this corollary (the Liouville forms can be assumed to agree as the push-offs
live in trivial neighbourhoods of contactomorphic binding components).
Performing a generalised Thurston--Winkelnkemper construction on the resulting
abstract open book then yields the second part of this corollary.
\end{proof}

\subsection{Naturality of the contact structure}

The preceding corollary ensures the existence of \emph{a} contact structure adapted
to the resulting open book structure on the fibre sum. It does not tell us however,
that the contact structure from the \emph{contact} fibre connected sum is adapted
to this open book. This is what we want to show in the following.
The problem is that the fibres of the symplectic normal bundle to the push-off $B^+$
are not tangent to the pages, i.e.\ the operation of the contact fibre connected
sum, which uses these fibres, does not fit nicely to the open book structure.
We are going to manipulate the abstract open book
(without changing the underlying contact manifold)
in such a way that the symplectic normal fibres of the push-off are tangent to
the pages of the open book and thus guaranteeing compatibility of open book
structure and fibre sum.

\begin{lemma}
\label{lemma:tangentLiouville}
Let $X$ be a codimension $2$ symplectic submanifold of an exact symplectic
manifold $(W, \omega=d\lambda)$ and suppose that the normal bundle of
$X$ is trivial.
Then there is a Liouville form $\widetilde{\lambda}$ such that the corresponding
Liouville vector field is tangent to $X$.
\end{lemma}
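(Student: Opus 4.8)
The plan is to construct $\widetilde\lambda$ by altering $\lambda$ only in a neighborhood of $X$, and to make the alteration by an \emph{exact} form so that the condition $d\widetilde\lambda = \omega$ is preserved automatically. First I would invoke the symplectic neighborhood theorem (as in the proof of the preceding proposition) to identify a neighborhood of $X$ with $X \times D^2$ carrying the split form $\omega = \omega' + s\,ds\wedge d\vartheta$, where $\omega' = \omega|_{TX} = d(\lambda|_{TX})$ and $(s,\vartheta)$ are polar coordinates on the $D^2$-factor. Writing $\lambda' := \lambda|_{TX}$ for the induced Liouville form on $X$, pulled back to the product via the projection $X \times D^2 \to X$, I set the model primitive $\lambda_0 := \lambda' + \tfrac12 s^2\, d\vartheta$, which satisfies $d\lambda_0 = \omega$.

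The key observation is that $\lambda_0$ already has the desired property. Since both $\omega$ and $\lambda_0$ split along the product, the associated Liouville vector field splits as $Y_0 = Y_X + \tfrac{s}{2}\partial_s$, where $Y_X$ is the Liouville field of $\lambda'$ on $X$ (lifted trivially to the product) and $\tfrac{s}{2}\partial_s$ is the radial Liouville field of $\tfrac12 s^2\,d\vartheta$ on the disc; indeed $i_{Y_X}(s\,ds\wedge d\vartheta)=0$ and $i_{(s/2)\partial_s}(s\,ds\wedge d\vartheta)=\tfrac12 s^2\,d\vartheta$, so $i_{Y_0}\omega=\lambda_0$. The radial term vanishes on $\{s=0\}$, so along $X = X \times \{0\}$ the field $Y_0$ reduces to $Y_X$, which is tangent to $X$.

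It then remains to glue $\lambda_0$ into $\lambda$ globally. On $X \times \{0\}$ we have $\lambda_0|_{T(X\times\{0\})} = \lambda' = \lambda|_{TX}$, while $d\lambda_0 = \omega = d\lambda$, so Lemma~\ref{lemma:exact} (applied with $M=X$) yields a smooth function $h$ on the neighborhood with $\lambda = \lambda_0 + dh$. Choosing a cutoff function $\chi$ equal to $1$ near $X$ and supported inside $X \times D^2$, I would define $\widetilde\lambda := \lambda - d(\chi h)$ on all of $W$. As $\chi h$ extends by zero to a globally smooth function, $\widetilde\lambda$ is a globally defined $1$-form with $d\widetilde\lambda = d\lambda = \omega$; it agrees with $\lambda$ outside the support of $\chi$ and equals $\lambda_0$ where $\chi\equiv 1$. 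Hence its Liouville vector field coincides with $Y_0$ near $X$ and is therefore tangent to $X$, as required.

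All the computations here are routine, and the genuine content is quite light: the normal component of the model Liouville field vanishes on $X$ simply because the radial term does. The only point demanding care — and the step I would regard as the crux — is arranging the modification to be simultaneously \emph{local} (so that $\lambda$ is untouched away from $X$) and \emph{exact} (so that $d\widetilde\lambda=\omega$ survives), which is exactly what the perturbation $d(\chi h)$ achieves, with Lemma~\ref{lemma:exact} supplying the primitive $h$.
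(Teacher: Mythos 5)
Your proposal is correct and follows essentially the same route as the paper: identify a neighbourhood of $X$ with $X\times D^2$ via the symplectic neighbourhood theorem, take the split primitive $\lambda'+\tfrac12 s^2\,d\vartheta$, use Lemma~\ref{lemma:exact} to write the discrepancy as $dh$, and kill it by a cut-off exact perturbation. The only cosmetic difference is that the paper phrases the correction as adding the Hamiltonian vector field of a cut-off of $h$ to the Liouville field, whereas you subtract $d(\chi h)$ from the form directly — these are the same modification.
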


\begin{proof}
The submanifold $X \subset W$ is symplectic with symplectic form
$$
\omega' := \omega|_{TX} = (d\lambda)|_{TX} = d(\lambda|_{TX}),
$$
i.e.\ $X$ is exact symplectic and a Liouville form is given by
$\lambda' := \lambda|_{TX}$.
As $X$ is of codimension $2$ and has trivial normal bundle,
the symplectic neighbourhood theorem (see \cite[Theorem~3.30]{McDuff1998})
allows us to write a neighbourhood of $X$ in $W$ as
$X \times D^2$ with symplectic form given as
$\omega = \omega' + sds\wedge d\vartheta$, where $s$ and $\vartheta$ are
polar coordinates on the $D^2$-factor.
In these coordinates one (local) primitive of $\omega$ is given by
$\widetilde{\lambda} := \lambda' + 1/2 s^2 d\vartheta$.
Observe that the restriction of both $\lambda$ and $\widetilde\lambda$ to
$X$ equals $\lambda'$. Hence, by Lemma~\ref{lemma:exact}, we have
$$
\lambda|_{X\times D^2} = \widetilde{\lambda} + dh
$$
for an appropriate function $h$.
Consider a function $\widetilde{h}$ on $W$ which is equal to $h$
near $X$ and vanishes outside a neighbourhood of $X$, and denote its
Hamilton vector field by $X_h$.
If $Y$ is the Liouville vector field corresponding to the Liouville form $\lambda$,
then the sum $Y + X_h$ is again Liouville. The associated Liouville form
restricts to $\widetilde{\lambda}$ near $X$. In particular,
the Liouville vector field is tangent to $X$.
\end{proof}

\begin{remark}
\label{remark:TW_with_changedLiouville}
Also note that the contact structures on the open book obtained by the generalised
Thurston--Winkelnkemper construction performed with two Liouville forms
$\lambda$ and $\lambda + dh$ which coincide near the boundary are isotopic.
Indeed, a family of contact structures is given by using $\lambda + d(sh)$ for
$s\in [0,1]$ and Gray stability can be applied.
\end{remark}

\begin{proposition}
\label{proposition:normals_in_page}
Let $(W, \omega)$ be an exact symplectic manifold
and $X\subset W$ a symplectic submanifold of codimension $2$ with trivial
symplectic normal bundle.
Let $\lambda_t$ ($t\in \mathbb{R}$) be a smooth family of Liouville forms such
that the corresponding Liouville vector fields $Y_t$ are tangent to $X$
and such that the $1$-form $\frac{d}{dt}\lambda_t$ vanishes on the fibres
of the symplectic normal bundle of $X$.

Then the fibres of the conformal symplectic normal bundle of the contact submanifold
$X\times \mathbb{R} \subset (W\times \mathbb{R},
\alpha = \lambda_t + dt)$
coincide
with the fibres of the symplectic normal bundle
of $X \times \{t\}$ in $W \times\{t\}$.
In particular, they are
tangent to the slices $W \times \{t\}$.
\end{proposition}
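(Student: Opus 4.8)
The plan is to compute the conformal symplectic normal bundle of $Y := X\times\R$ explicitly and identify it with the (trivially lifted) symplectic normal bundle of $X$ in $W$, which visibly lies in $T(W\times\{t\})$. Throughout I take for granted that $\alpha=\lambda_t+dt$ is a contact form and that $Y$ is a contact submanifold; indeed $\lambda_t|_{TX}+dt$ is contact on $Y$ with page-form $\omega|_{TX}$ and Liouville field $Y_t|_X$, so the conformal symplectic normal bundle $\mathrm{CSN}(Y)$ — the $d\alpha$-orthogonal complement of $TY\cap\xi$ inside the symplectic bundle $(\xi|_Y, d\alpha)$ — has rank $2$, matching the codimension.

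First I would record two structural formulas. Writing vectors of $T(W\times\R)$ as $v+c\,\partial_t$ with $v\in TW$, the contact hyperplane field is $\xi=\{\,v-\lambda_t(v)\partial_t : v\in TW\,\}$, and a short computation gives $d\alpha=\omega+dt\wedge\dot\lambda_t$ where $\dot\lambda_t=\frac{d}{dt}\lambda_t$. Evaluating $d\alpha$ on two elements $\hat v=v-\lambda_t(v)\partial_t$, $\hat w=w-\lambda_t(w)\partial_t$ of $\xi$, using $dt(\hat v)=-\lambda_t(v)$ and that $\omega,\dot\lambda_t$ annihilate $\partial_t$, yields
\[
d\alpha(\hat v,\hat w)=\omega(v,w)-\lambda_t(v)\dot\lambda_t(w)+\lambda_t(w)\dot\lambda_t(v).
\]
Since $TY\cap\xi=\{\hat u:u\in TX\}$, the bundle $\mathrm{CSN}(Y)$ consists of those $\hat v$ with $v\in TW|_X$ for which the displayed expression vanishes for every $u\in TX$.

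The key step is to feed the two hypotheses into this orthogonality condition. For $n$ in the symplectic normal bundle $(TX)^{\omega}=\{v:\omega(v,TX)=0\}$ I would note three facts: $\omega(n,u)=0$ for all $u\in TX$ by definition; $\lambda_t(n)=\omega(Y_t,n)=0$ because $Y_t$ is tangent to $X$ (first hypothesis) and $n$ is $\omega$-orthogonal to $TX$; and $\dot\lambda_t(n)=0$ by the second hypothesis. Substituting these into the formula above shows that $\hat n=n-\lambda_t(n)\partial_t=n$ lies in $\mathrm{CSN}(Y)$ and, crucially, has vanishing $\partial_t$-component, i.e.\ is tangent to $W\times\{t\}$. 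Thus the rank-$2$ bundle $(TX)^{\omega}$ embeds into $\mathrm{CSN}(Y)$; as both have rank $2$ they coincide, proving the claim.

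The calculation itself is routine linear algebra on $\xi$, so the genuine content — the step I would treat with care — is checking that the two hypotheses are precisely what is needed to cancel the two ``off-diagonal'' terms $\lambda_t(n)\dot\lambda_t(u)$ and $\lambda_t(u)\dot\lambda_t(n)$: tangency of $Y_t$ forces $\lambda_t(n)=0$ (killing the first), while the vanishing of $\dot\lambda_t$ on the normal fibres gives $\dot\lambda_t(n)=0$ (killing the second). If either hypothesis were dropped the normal lift would either acquire a $\partial_t$-component or fail to be $d\alpha$-orthogonal to $TY\cap\xi$, so no shortcut bypassing them appears available.
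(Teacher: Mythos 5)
Your proof is correct and follows essentially the same route as the paper: both arguments show that the symplectic normal bundle $(TX)^{\omega}$ lies in $\ker\alpha$ because $\lambda_t(n)=\omega(Y_t,n)=0$ (tangency of $Y_t$), is $d\alpha$-orthogonal to $T(X\times\R)\cap\ker\alpha$ because $\dot\lambda_t(n)=0$, and then conclude by a rank count that it coincides with the conformal symplectic normal bundle. The only difference is presentational -- the paper organises the computation via an explicit splitting of $T_pW$ using auxiliary vector fields $X_t$ with $\omega(X_t,Y_t)=1$, whereas you evaluate $d\alpha$ directly on the contact hyperplane -- so there is nothing substantive to add.
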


\begin{proof}
The proof is a straight-forward calculation in a symplectic 
neighbourhood $X\times D^2$ of $X$. See \cite{KoelschDurst} for details.
\end{proof}

We will only need the proposition in a special case during a generalised
Thurston--Winkelnkemper construction.
However, note that it also holds for a family
$$\lambda_t = \big(1-\mu (t)\big)\lambda + \mu(t) \widetilde{\lambda}$$
interpolating two Liouville forms with tangent Liouville vector field provided
that their difference is exact with a primitive function only depending
on $\Sigma'$-directions.
This can then be used when working with Giroux domains (cf.~Section~\ref{GirouxTorsionEx}) instead of the
Thurston--Winkelnkemper construction.

We can now show that the binding sum construction can be made compatible with the underlying contact structures and thus prove Theorem~\ref{thm:contact}.

\begin{proof}[{Proof of Theorem~\ref{thm:contact}}]
By Corollary~\ref{corollary:abstract_nested}
the push-off is an abstract nested open book of an abstract open book description
with page $\Sigma$ and monodromy $\phi$, where $\phi$ restricts to
$$
\phi|_{\Sigma'\times D^2} = \phi' \times \id_{D^2}
$$
for a symplectomorphism $\phi'\colon\thinspace\Sigma'\rightarrow\Sigma'$
in a neighbourhood $\Sigma'\times D^2$ of $\Sigma'$ given by the symplectic normal
bundle.
Furthermore, we can assume that the Liouville form restricts to
$\lambda = \lambda' + 1/2s^2 d\vartheta$ in this neighbourhood
by Lemma~\ref{lemma:tangentLiouville} and its proof and
Remark~\ref{remark:TW_with_changedLiouville}.

Note that the monodromy $\phi$ will not necessarily be exact symplectic but
according to \cite[Lemma~7.3.4]{Geiges2008} it is isotopic through
symplectomorphisms equal to the identity near the boundary to an exact
symplectomorphism.
The idea of the proof is to define a vector field $X$ on $\Sigma$ by the condition
$i_X\omega = \lambda - \phi^*\lambda$ and checking that precomposing $\phi$ with
the time-$1$ flow of $X$ is an exact symplectomorphism with the desired properties.
Now in our situation, observe that the vector field $X$ is tangent to $\Sigma'$,
and moreover, projects to zero under the natural projection
$\Sigma'\times D^2 \rightarrow D^2$
also in a neighbourhood $\Sigma'\times D^2$ as above.
As a consequence, the restriction of the resulting monodromy
(still denoted by $\phi$ by abuse of notation)
will still be of the form
$$
\phi|_{\Sigma'\times D^2} = \phi' \times \id_{D^2}
$$
but now for an \emph{exact} symplectomorphism
$\phi'\colon\thinspace\Sigma'\rightarrow\Sigma'$.
In particular, we have $\phi^*\lambda - \lambda = dh$ for a function $h$ which
only depends on the $\Sigma'$-directions on $\Sigma'\times D^2$.

If we form the generalised mapping torus $\Sigma_h(\phi)$ and equip it with
the contact form $\lambda + dt$, Proposition~\ref{proposition:normals_in_page}
tells us that the fibres of the conformal symplectic normal bundle of the push-off
are tangent to the slices $\Sigma\times\{t\}$ and are given by the $D^2$-direction
of $\Sigma' \times D^2 \subset \Sigma$.
We now want to show that the fibres of the conformal symplectic normal bundle
are then also tangent to the pages in the genuine mapping torus.
For this, it is enough to observe that the diffeomorphism between the
generalised and the genuine mapping tori maps a point $(p, t)$ to a point
$(p, \widetilde{t})$, where $\widetilde{t}$ only depends on $t$ and the value of
the function $h$ at $p$.
Thus, as in a symplectic neighbourhood $\Sigma'\times D^2$ the function $h$ only
depends on $\Sigma'$,
we have that for a point $p'\in \Sigma'$ and fixed $t$
$(p', s, \vartheta, t)$
is mapped to
$(p', s, \vartheta, \widetilde{t})$ independent of $(s, \vartheta)$.
Hence, the fibres of the conformal symplectic normal bundle are tangent to the
pages.
Also, the resulting Reeb vector field is tangent to the push-off, as it is a
multiple of the monodromy vector field outside a neighbourhood of the binding.
Furthermore, by construction it is adapted to the open book structure, i.e.\ it
is transverse to the pages.
Thus, denoting the restriction of the contact form $\alpha$ to the push-off $B^+$
by $\alpha_{B^+}$, the $\alpha$ restricts to
$\alpha_{B^+} + s^2 d\vartheta$
on a tubular neighbourhood $B^+ \times D^2$ (with polar coordinates
$(s, \vartheta)$) given by the conformal symplectic normal bundle of $B^+$.
The fibres of the symplectic normal bundle being tangent to the pages means
that the contact fibre connected sum along $B^+$ has a natural open book structure.
The contact form for the resulting contact structure used in the contact fibre
connected sum construction is of the form
$\widetilde{\alpha} = \alpha_{B^+} + f(s) d\vartheta$
for an appropriate function $f$ and coincides with $\alpha$ near
$B^+ \times \partial D^2$ (see \cite[proof of Theorem~7.4.3]{Geiges2008}).
In particular, the Reeb vector field of $\widetilde{\alpha}$ is still transverse
to the fibres of  the open book fibration. Hence, the resulting contact structure
in the contact fibre connected sum is adapted to the resulting open book structure.
\end{proof}


\subsection{Examples in the contact setting}
\label{section:contact_examples}

We conclude the paper with some applications and examples of the binding sum construction
in the contact setting.

\subsubsection{$S^4 \times S^1$}

Let $M = M_0 \sqcup M_1$ with $M_i$ the five-dimensional sphere $S^5$ with 
standard contact structure and compatible open
book decomposition $(\Sigma_i = D^4, \id)$.
Then the binding has two components $B_i$, both a standard $3$-sphere that we
can equip with the compatible open book decomposition $(\Sigma'_i = D^2, \id)$.
We have $M_i = D^4 \times S^1 \cup S^3 \times D^2$, so performing
the binding sum on $M$ along the $B_i$ yields $S^4 \times S^1$.
Note that the framing of the binding is unique up to homotopy because
$B$ is simply-connected.
By Theorem~\ref{thm:main1}, the binding sum $S^4 \times S^1$ has a natural open
book decomposition obtained by forming the sum along the push-off of the binding.

Pushing a page $\Sigma'_i = D^2$ of the binding open book into the page
$\Sigma_i = D^4$ and then removing a neighbourhood of $\Sigma_i'$, topologically
turns $\Sigma_i$ into a copy of $D^3 \times S^1$.
The resulting page is then obtained by identifying two copies of $D^3 \times S^1$
along a neighbourhood of $\{\ast\} \times S^1 \subset \partial D^3 \times S^1$,
i.e.\ it is $D^3 \times S^1$ as well.
The new binding is the contact binding sum of the $B_i$ with respect to the
specified open book decomposition.
Hence, the new binding has an open book decomposition with page an annulus and
-- applying the formula from \cite{Klukas} for the page framing --
monodromy isotopic to the identity.
This means that the resulting binding is $S^2\times S^1$ with standard
contact structure.
This has a unique symplectic filling up to blow-up and symplectic equivalence
(cf.~\cite[Theorem~4.2]{Ozbagci2015}), so the resulting page is indeed symplectomorphic to
$D^3 \times S^1$ arising as the $4$-ball with a $1$-handle attached.

\subsubsection{Mori's class of examples}
In \cite{Mori2012} Mori constructs an infinite family of contact structures with
compatible open book decompositions on $S^4 \times S^1$ via a fibre connected
sum construction.
Topologically, it is the sum of two copies of $S^5$ along $3$-spheres as
described in the previous example. However, contact forms and open books are
altered such that the $3$-spheres are no longer the bindings of the open book
decompositions but nested open books. We will briefly explain his construction and
refer the reader to the original article \cite{Mori2012} for details.

Let $S^5 = \{ r_1^2 + r_2^2 +r_3^2 = 1\}$ be the unit sphere in $\mathbb{C}^3$
with standard contact structure as defined by $\alpha = \sum r_i^2 d\varphi_i$
and open book fibration $\pi\colon\thinspace S^5\setminus B \rightarrow S^1$ with
$B := \{r_1 = 0\}$.
For $p$ a positive integer, Mori constructs a function $f$ equal to a positive constant near $0$ such that the $1$-form $\alpha_p := f(r_1)\alpha$ is contact.
Furthermore, there is a function $g$ vanishing around $0$ such that the argument
of the complex-valued function
$$
w_p = g(r_1) r_1^p \textup{e}^{ip\varphi_1} + r_2 r_3 \textup{e}^{i(\varphi_1 + \varphi_2)}
$$
is an open book fibration $\pi_p\colon\thinspace S^5\setminus B_p \rightarrow S^1$
supporting $\ker\alpha_p$, where $B_p$ is the vanishing set of $w_p$.
Note that the restriction of $\pi_p$ to $B$ induces the standard open book with
annular pages of $S^3$. Also, the contact structure on $B$ induced by $\alpha_p$ is up
to scaling with a positive constant the standard contact structure of $S^3$.
Hence, the original binding $B$ is realised as a contact nested open book of
$(S^5, \alpha_p, \pi_p)$.
One can show that the binding $B_p$ of this new open book is diffeomorphic to the
lens space $L(p, p-1)$. Clearly, for $p>1$, $B$ and $B_p$ are not diffeomorphic,
i.e.\ $B$ is an explicit
example of a nested open book which is not a push-off of the binding.

Mori then performs the contact fibre connected sum of two copies of $S^5$
equipped with contact forms $\alpha_{p_i}$ and open books $\pi_{p_i}$ along their
common nested open books $B$.
As the open book fibrations agree in a neighbourhood of the $3$-spheres $B$ in
both copies, this immediately gives rise to an open book structure supporting the resulting
contact structure on $S^4\times S^1$.
Furthermore, the procedure does not only yield an infinite family of contact
structures and leaf-wise symplectic foliations but also to an infinite family of
so-called $\epsilon$-$\tau$-\emph{confoliations} (see~\cite{Mori2012}).

\subsubsection{Open books with Giroux torsion}
\label{GirouxTorsionEx}
Let $M$ be a closed oriented manifold admitting a \emph{Liouville pair}
$(\alpha_+,\alpha_-)$, i.e.\ a pair consisting of a positive contact form $\alpha_+$
and a negative contact form $\alpha_-$ such that
$1/2(e^{-s}\alpha_- + e^{s}\alpha_+)$ is a positively oriented Liouville form on
$\R \times M$ (with $s$ denoting the coordinate on the $\R$-factor).
Then the $1$-form
$$
\lambda_{GT} = \frac{1+ \cos s}{2}\alpha_+ + \frac{1 - \cos s}{2}\alpha_-
+ \sin s dt
$$
defines a positive contact structure on
$\R\times S^1 \times M$ ($s$ and $t$ denote the respective coordinates on the
first two factors) (see \cite[Proposition~8.1]{MNW13}).
With this model, Massot, Niederkr\"uger and Wendl \cite{MNW13} define a
\defterm{Giroux $2k\pi$-torsion domain} as
$([0,2k\pi]\times S^1 \times M, \lambda_{GT})$.
Just as in the $3$-dimensional setting this higher-dimensional version of Giroux
torsion is a filling obstruction in the sense that a contact manifold admitting
a contact embedding of a Giroux $2\pi$-torsion domain is not strongly fillable
(see \cite[Corollary~8.2]{MNW13}).
Observe that a Giroux $2\pi$-torsion domain with boundary blown down
(cf.\ \cite[Section~4]{MNW13})
is the binding sum of two copies
of the open book with page $([0,\pi] \times M, \beta)$ and trivial monodromy,
where
$$\beta = \frac{1}{2}(e^{-s}\alpha_- + e^{s}\alpha_+),$$
along $\{\pi\} \times M$.
Given any contact open book $(\Sigma, \phi)$ with $\Sigma$ having two boundary
components contactomorphic to $M$,
Theorem~\ref{thm:contact} yields an open book decomposition of the binding sum
$(\Sigma, \phi) \,\#\, ([0,\pi] \times M, \id) \,\#\, ([0,\pi] \times M, \id)$,
which is a manifold admitting an embedding of a Giroux $2\pi$-torsion domain
modelled on $M$.

\subsubsection{Fibrations over the circle}
\label{section:circle_fibrations}

Theorems~\ref{thm:main1} and~\ref{thm:contact} yield a contact open book decomposition of certain bundles over the circle with fibres being convex
hypersurfaces.

Recall that an oriented hypersurface $S$ in a contact manifold is called \defterm{convex}
(in the sense of Giroux \cite{Giroux1991}) if there is a contact vector field
transverse to $S$. A neighbourhood of the hypersurface can then be identified with
$S\times\R$ such that the contact structure is $\R$-invariant, i.e.\ there is
a contact form of type $\beta + udt$ with $\beta$ a $1$-form on $S$ and 
$u\colon\thinspace S\rightarrow\R$ a function such that
$(d\beta)^{n-1} \wedge (u d\beta + n\beta \wedge du)$
is a volume form on $S$ (here $2n$ is the dimension of $S$).
Conversely, given a triple $(S,\beta,u)$ consisting of a $(2n)$-dimensional
closed manifold $S$, a $1$-form $\beta$ on $S$ and a function
$u\colon\thinspace S\rightarrow\R$ satisfying the above conditions,
the $1$-form $\beta + u dt$ defines an
$\R$- or $S^1$-invariant contact form on $S\times\R$ or $S\times S^1$,
respectively.
Observe that $S$ with the zero set of $u$ removed is an exact symplectic manifold
with Liouville form $\beta / u$.

Now given a triple $(S,\beta,u)$ as above and a diffeomorphism $\phi$
of $S$ restricting to the identity near $\Gamma := \{u = 0 \}$ and to
symplectomorphisms $\phi_\pm$ on the interior of $S_\pm := \{ \pm u \geq 0 \} $, the $S$-bundle $M$ over $S^1$ with
monodromy $\phi$ carries a natural contact structure, such that each fibre defines a 
convex surface modelled by $(S,\beta,u)$. In particular, the fibration admits a
contact vector field transverse 
to the fibres, which is tangent to the contact structure exactly over $\Gamma$.
Observe that $M$ is equal to the binding sum of the open books $(S_+, \phi_+)$
and $(-S_-, \phi^{-1}_-)$.
Thus, Theorem~\ref{thm:main1} yields an open book
description of $M$, which is adapted to the contact structure by
Theorem~\ref{thm:contact}.



\end{document}